\newtheorem{theorem}{Theorem}[section]
\newtheorem{corollary}[theorem]{Corollary}
\newtheorem{lemma}[theorem]{Lemma}
\theoremstyle{definition}
\newtheorem{definition}[theorem]{Definition}
\newtheorem{example}[theorem]{Example}
\theoremstyle{remark}
\newtheorem{remark}[theorem]{Remark}
\numberwithin{equation}{section}
\newcommand{\p}{\partial}
\newcommand{\sig}{\Sigma}
\newcommand{\n}{\nabla}
\newcommand{\la}{\langle}
\newcommand{\ra}{\rangle}
\newcommand{\der}[1]{\frac{\partial}{\partial #1}}
\newcommand{\fder}[2]{\frac{\partial #1}{\partial #2}}
\newcommand{\vp}{\varphi}
\begin{document}
\date{}
\title[The first Steklov eigenvalue, conformal geometry, and minimal surfaces]
{The first Steklov eigenvalue, conformal geometry, and minimal surfaces}
\author{Ailana Fraser}
\address{Department of Mathematics \\
                 University of British Columbia \\
                 Vancouver, BC V6T 1Z2}
\email{afraser@math.ubc.ca}
\author{Richard Schoen}
\address{Department of Mathematics \\
                 Stanford University \\
                 Stanford, CA 94305}
\thanks{The first author was partially supported by  the 
Natural Sciences and Engineering Research Council of Canada (NSERC) and the second
author was partially supported by NSF grant DMS-0604960}
\email{schoen@math.stanford.edu}

\begin{abstract} 
We consider the relationship of the geometry of compact Riemannian manifolds with boundary to the first nonzero eigenvalue $\sigma_1$ of the Dirichlet-to-Neumann map (Steklov eigenvalue). For surfaces $\Sigma$ with genus $\gamma$ and $k$ boundary components we obtain the upper bound 
$\sigma_1L(\p\Sigma)\leq 2(\gamma+k)\pi$. For $\gamma=0$ and $k=1$ this result was obtained by Weinstock in 1954,
and is sharp. We attempt to find the best constant in this inequality for annular surfaces ($\gamma=0$ and $k=2$). For rotationally symmetric metrics we show that the best constant is achieved by the induced metric on the portion of the catenoid centered at the origin which meets a sphere orthogonally
and hence is a solution of the free boundary problem for the area functional in the ball. For a general class of (not necessarily rotationally symmetric) metrics on the annulus, which we call supercritical,
we prove that $\sigma_1(\sig)L(\p\Sigma)$ is dominated by that of the critical catenoid with equality
if and only if the annulus is conformally equivalent to the critical catenoid by a conformal
transformation which is an isometry on the boundary. Motivated by the annulus case, we show that
a proper submanifold of the ball is immersed by Steklov eigenfunctions if and only if it is a free boundary solution. We then prove general upper bounds for 
conformal metrics on manifolds of any dimension which can be properly conformally immersed into the unit ball in terms of certain conformal volume quantities. We show that these bounds are
only achieved when the manifold is minimally immersed by first Steklov eigenfunctions. We also use
these ideas to show that any free boundary solution in two dimensions has area at least $\pi$, and 
we observe that this implies the sharp isoperimetric inequality for free boundary solutions in the two dimensional case. 
\end{abstract}

\maketitle

\section{Introduction}

In this paper we consider a spectral problem for manifolds $\Sigma$ with nonempty boundary 
$\p\Sigma$. It is well known that the Dirichlet-to-Neumann map which takes a function $u$ on 
$\p\Sigma$ to the normal derivative of the harmonic extension of $u$ is a self-adjoint operator with a discrete spectrum $\sigma_0=0<\sigma_1\leq \sigma_2\leq\cdots$ tending to infinity. The eigenvalues for this problem were first discussed in 1902 by Steklov and are often called Steklov eigenvalues. In 1954 Weinstock \cite{W} showed that for a simply connected plane domain $\Sigma$ the quantity 
$\sigma_1\cdot L(\p\Sigma)$ is uniquely maximized by a round disk. This theorem and its proof are analogous to a result of Szeg\"o \cite{Sz} for the first nonzero Neumann eigenvalue. Since that time there have been many papers written which give estimates on Steklov eigenvalues in higher dimensions and on Riemannian manifolds (see Payne \cite{P}, Bandle \cite{Ba}, Hersch and Payne \cite{HP}, Hersch, Payne, and Schiffer \cite{HPS}, Kuttler and Siggilito \cite{KS}, Shamma \cite{Sh}, Edward \cite{Ed}, Escobar \cite{E1}, \cite{E2}, Brock \cite{Br}, Dittmar \cite{D}, Girouard and Polterovich \cite{GP1}). The reader may consult Girouard and Polterovich \cite{GP2} for a recent survey which includes this topic.

Here we extend the result of Weinstock to arbitrary Riemannian surfaces with boundary to obtain the upper bound (Theorem \ref{theorem:yy})
\[ 
             \sigma_1 L(\p\Sigma)\leq 2(\gamma+k)\pi
\] 
for a surface of genus $\gamma$ with $k$ boundary components. Note that for $\gamma=0$ and $k=1$ this reduces to the Weinstock bound and is sharp. The proof uses a result of Ahlfors \cite{A}
with an improvement by Gabard \cite{G} to construct proper holomorphic maps from $\Sigma$ to the unit disk with controlled degree, and then the idea employed by Szeg\"o and Weinstock to use automorphisms of the disk to balance the map and construct test functions. Note that for compact surfaces without boundary an analogous idea was employed by Yang and Yau \cite{YY} to generalize a theorem of Hersch \cite{H} to higher genus surfaces.

We also show that the bound given above is not sharp at least for $\gamma=0$, and we attempt to determine the sharp bound for the case of surfaces homeomorphic to an annulus. An interesting elementary analysis 
(Section \ref{section:rotsym}) for annuli with rotationally symmetric metrics shows that for such annuli the quantity $\sigma_1 L(\p\Sigma)$ is maximized precisely for the ``critical catenoid"; that is, the portion of the catenoid centered at the origin inside the unit ball which meets the boundary orthogonally along the boundary of the ball. Moreover, for this surface, $\sigma_1$ has multiplicity $3$ and the eigenspace is spanned by the embedding functions. We denote $T(1)=2t_1$
where $t_1$ is the positive solution of $t_1=\coth(t_1)$. Numerically we see that $t_1\approx 1.2$. The critical catenoid is characterized by the conditions that it is biholomorphic to $[0,T(1)]\times S^1$, it has a rotationally symmetric metric, and it has boundary curves of equal length. The
value $(\sigma_1 L)^*$ of $\sigma_1 L$ for the critical catenoid is given by $4\pi/t_1$. 

Now for a general metric on an annulus it seems a reasonable conjecture that $\sigma_1 L$
should be at most $(\sigma_1 L)^*$. In Section 4 we obtain partial results in this direction.
Given any metric on an annulus $\Sigma$, we let $\alpha$ denote the ratio of its boundary lengths
and we let $T$ denote the unique positive real number for which $\Sigma$ is
biholomorphic to $[0,T]\times S^1$. We refer to those annuli with $T\leq 1/4(\alpha^{1/2}+\alpha^{-1/2})^2T(1)$ (resp. $T\geq 1/4(\alpha^{1/2}+\alpha^{-1/2})^2T(1)$) as {\it subcritical} (resp. {\it supercritical}). Thus any Riemannian annulus is either subcritical, supercritical, or both.

In Theorem \ref{theorem:supercritical} we show that for any supercritical Riemannian annulus we have $\sigma_1\cdot L(\p\Sigma)$ bounded above by $(\sigma_1 L)^*$, the corresponding quantity for the critical catenoid. We also show that equality is achieved if and only if $\Sigma$ is conformally equivalent to the critical catenoid by a conformal transformation which is an isometry on the boundary. Thus for supercritical annuli we obtain the improved sharp bound 
$\sigma_1 \cdot L(\p \sig) \leq 4\pi/t_1\approx 4\pi/1.2$ compared with the bound of
$4\pi$ given by Theorem \ref{theorem:yy}.

Motivated by the case of annuli we then explore the connection to minimal submanifolds $\Sigma^k$ lying in the unit ball $B^n$ with boundary contained in the boundary of the ball and with conormal vector equal to the position vector at boundary points. Such minimal submanifolds are critical for the free boundary problem of extremizing the volume among deformations which preserve the ball (but are not necessarily the identity along the boundary). We observe that such solutions arise from variational (min/max) constructions, and examples include equatorial disks, the critical catenoid discussed above, as well as the cone over any minimal submanifold of the sphere. Given a submanifold properly immersed in the unit ball, it is a free boundary solution if and only if the coordinate functions are Steklov eigenfunctions with eigenvalue $1$. It is then natural to ask whether free boundary solutions generally solve extremal problems for Steklov eigenvalues in natural classes of manifolds. In Section 
\ref{section:cv} we develop a theory which we call {\it boundary and relative conformal volume} because it is analogous to the conformal volume theory of Li and Yau \cite{LY} except that the boundary plays an essential role in this theory. Using the Gauss-Bonnet Theorem with boundary we show (Theorem \ref{theorem:max}) that when $k=2$, a free boundary solution has boundary length which is a maximum over the boundary lengths of its conformal images in the ball. We use this to show (Theorem \ref{theorem:lowerbound}) that any free boundary solution has area at least $\pi$.
We observe that this inequality is equivalent to the sharp isoperimetric inequality for free boundary surfaces. We define the boundary conformal volume to be the Li-Yau conformal volume of the boundary submanifold.

We then proceed to define a relative conformal volume for manifolds $\Sigma$ which admit proper conformal immersions into the unit ball. We take the maximum volume of the conformal images of a given immersion, and then minimize over conformal immersions. We show that the relative conformal volume gives a general upper bound on the first nonzero Steklov eigenvalue over all conformal metrics on $\Sigma$. Specifically we show for any $k$ the general upper bound on 
$\sigma_1 V(\p\Sigma)(V(\Sigma))^{(2-k)/2}$ in terms of the relative conformal volume. For $k=2$ this reduces to the bound $\sigma_1\cdot L(\p\Sigma)\leq 2 V_{rc}(\Sigma,n)$.

We thank the referee for several helpful suggestions, and especially for pointing out the recent
paper \cite{G} which gave an improvement of the bound in Theorem \ref{theorem:yy}. His 
questions also led to the formulation and inclusion of Theorem \ref{theorem:yysharp}.

\section{Dirichlet-to-Neumann Map} \label{section:dn}

Let $(\sig,g)$ be a compact $k$-dimensional Riemannian manifold with boundary 
$\p\sig \neq \emptyset$ and Laplacian $\Delta_g$. Given a function $u \in C^\infty(\p \sig)$, let $\hat{u}$ be the harmonic extension of $u$:
\[
    \begin{cases}
    \Delta_g \hat{u} =0  & \text{on }\; \sig, \\   
    \hat{u} =u &  \text{on } \p \sig.
    \end{cases}
\]
 Let $\nu$ be the outward unit conormal along $\p \sig$. The Dirichlet-to-Neumann map is the map
 \[
             L: C^\infty(\p \sig) \rightarrow C^\infty(\p \sig)
 \]
 given by 
 \[
           Lu=\fder{\hat{u}}{\nu}.
 \]         
$L$ is a nonnegative self-adjoint operator with discrete spectrum
$\sigma_0 < \sigma_1 \leq \sigma_2 \leq \cdots$ tending to infinity.
The eigenvalues for this problem were first discussed in 1902 by Steklov and are often called Steklov eigenvalues.

Since the constant functions are in the kernel of $L$, the lowest eigenvalue $\sigma_0$ of $L$ is zero. The first nonzero eigenvalue $\sigma_1$ of $L$ can be characterized variationally as follows:
\[
       \sigma_1=\inf_{u \in C^1(\p \sig),\; \int_{\p \sig} u=0}
       \frac{\int_\sig |\n \hat{u}|^2 \,da}{\int_{\p \sig} u^2 \,ds}.
\]

\begin{example} \label{example:ball}
As can be easily seen the eigenvalues of the Dirichlet-to-Neumann map of the unit ball $B^n$ in 
$\mathbb{R}^n$ are $\sigma_k=k$, $k=0, \,1, \, 2, \ldots$, and the eigenspace of $\sigma_k$ is given by the space of homogeneous harmonic polynomials of degree $k$ restricted to the sphere $\p B^n$.
\end{example}

We will say that a minimal submanifold $\sig$, properly immersed in a domain $\Omega$, is a
{\it free boundary solution} if the outward unit normal vector of $\p\sig$ (the conormal vector) agrees
with the outward unit normal to $\p\Omega$ at each point of $\p\sig$. This terminology
reflects the fact that such a minimal submanifold $\sig$ is a critical point of the volume functional
among relative cycles in $\Omega$; that is, $\sig$ is  critical for volume among deformations
which preserve the domain $\Omega$, but do not necessarily fix $\p\sig$.

In the next lemma we observe an interesting connection between eigenvalues of the Dirichlet-to-Neumann map, and minimal submanifolds that are solutions to the free boundary problem in the unit ball in $\mathbb{R}^n$.

\begin{lemma} \label{lemma:minimal}
Let $\sig^k$ be a properly immersed submanifold in the unit ball $B^n$ in $\mathbb{R}^n$.
Then $\sig^k$ is a minimal submanifold which meets $\p B^n$ orthogonally if and only if the coordinate
functions of $\sig$ in $\mathbb{R}^n$ are eigenfunctions of the Dirichlet-to-Neumann map, with eigenvalue one.
\end{lemma}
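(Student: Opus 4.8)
The plan is to unwind the statement using two standard facts about an isometric immersion $x:\Sigma^k\to\R^n$. First, the Beltrami-type identity $\Delta_\Sigma x=\vec H$, where $\vec H$ is the mean curvature vector of $\Sigma$ (the trace of the second fundamental form) and $\Delta_\Sigma$ is applied componentwise to the position map; thus $\Sigma$ is minimal if and only if every coordinate function $x_i$ is harmonic on $\Sigma$. Second, along $\partial\Sigma$ one has $\partial x_i/\partial\nu=\langle\nabla_\Sigma x_i,\nu\rangle=\langle e_i,\nu\rangle=\nu_i$, because $\nabla_\Sigma x_i$ is the tangential projection of the constant vector field $e_i$ and $\nu$ is tangent to $\Sigma$; in other words the conormal derivative of the position map is the conormal vector itself.

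Next I would set up the geometric dictionary for the free boundary condition. Since $\partial\Sigma\subset\partial B^n=S^{n-1}$, the outward unit normal of $\partial B^n$ at a point $p\in\partial\Sigma$ is precisely the position vector $x(p)$. By definition $\Sigma$ meets $\partial B^n$ orthogonally exactly when the outward unit conormal $\nu$ of $\partial\Sigma$ in $\Sigma$ is everywhere normal to $\partial B^n$, i.e.\ parallel to $x$; as $|\nu|=|x|=1$ and $\nu$ points outward, this is equivalent to the vector identity $\nu=x$ on $\partial\Sigma$, that is, $\nu_i=x_i$ on $\partial\Sigma$ for each $i$.

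With these in hand both implications are short. For the forward direction, if $\Sigma$ is minimal then each $x_i$ is harmonic, hence equals its own harmonic extension, so $L(x_i|_{\partial\Sigma})=\partial x_i/\partial\nu=\nu_i$; if moreover $\Sigma$ meets $\partial B^n$ orthogonally then $\nu_i=x_i$ on $\partial\Sigma$, and therefore $L(x_i|_{\partial\Sigma})=x_i|_{\partial\Sigma}$, i.e.\ the coordinate functions are Steklov eigenfunctions with eigenvalue $1$. For the converse, that the coordinate functions are Steklov eigenfunctions with eigenvalue $1$ means each $x_i$ is harmonic on $\Sigma$ (so that it coincides with the harmonic extension of its boundary trace) and satisfies $\partial x_i/\partial\nu=x_i$ on $\partial\Sigma$. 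Harmonicity of all the $x_i$ gives $\vec H=\Delta_\Sigma x=0$, so $\Sigma$ is minimal; and $\partial x_i/\partial\nu=x_i$ for every $i$ is the statement $\nu=x$ on $\partial\Sigma$, which by the dictionary above says exactly that $\Sigma$ meets $\partial B^n$ orthogonally.

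The one point that needs care is the meaning of ``the coordinate functions are eigenfunctions of the Dirichlet-to-Neumann map'': since the $x_i$ are defined on all of $\Sigma$ and not just on $\partial\Sigma$, this must be read as asserting both that each $x_i$ is harmonic in the interior of $\Sigma$ (equivalently, equals the harmonic extension of its boundary values) and that its trace satisfies the eigenvalue equation $Lu=u$. Granting that reading, the proof reduces entirely to the two facts above, so the substantive input is the identity $\Delta_\Sigma x=\vec H$, whose verification is a short local computation in geodesic normal coordinates on $\Sigma$; everything else is the bookkeeping of tangential projections and the definition of orthogonal intersection.
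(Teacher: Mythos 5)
Your proposal is correct and follows essentially the same route as the paper's proof: minimality is characterized by harmonicity of the coordinate functions via $\Delta_\Sigma x=\vec H$, and the free boundary condition is the identity $\nu=x$ on $\p\sig$, which is exactly $\p x^i/\p\nu=x^i$ for all $i$. You simply fill in more of the details (the computation $\p x_i/\p\nu=\nu_i$ and the reading of the eigenfunction condition) that the paper treats as well known.
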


\begin{proof}
Let $\sig$ be a properly immersed submanifold in the unit ball $B^n$ with $\p \sig \subset \p B^n$.
Then it is well known that $\sig$ is minimal if and only if the coordinate functions $x^i$ of $\sig$ in 
$\mathbb{R}^{n}$ are harmonic functions, $\Delta x^i=0$ for $i=1, \ldots, n$. If $\nu$ is the outward unit conormal along $\p \sig$, the condition
\[
            \der{\nu} x^i=x^i, \qquad i=1, \ldots, n
\]
is equivalent to $\nu=x$, which is equivalent to the condition that $\sig$ meets $\p B^n$ orthogonally.
Therefore, $\sig$ is a solution to the free boundary problem if and only if the coordinate functions are eigenfunctions of the Dirichlet-to-Neumann map, with eigenvalue one.
\end{proof}

Lemma \ref{lemma:minimal} is analogous to the property that a submanifold $\sig^k$ immersed in the sphere $S^n$ is a minimal submanifold of the sphere if and only if the coordinate functions are eigenfunctions of the Laplacian with eigenvalue $k$.

\vspace{3mm}

The first result giving a bound on $\sigma_1$ was due to Weinstock \cite{W}, where he proved that if 
$\sig$ is a compact simply connected plane domain then
\[
       \sigma_1 L(\p \sig) \leq 2\pi,
\]
with equality if and only if $\sig$ is a disk. He went on to show that if $\Sigma$ is a simply connected surface with boundary, then $\sigma_1 L(\p \sig) \leq 2\pi$ with equality if and only if there is conformal map from $\Sigma$ to the unit disk which is an isometry on the boundary. Note that $\sigma_1(D)=1$ (as in example \ref{example:ball}), and so the estimate can be written 
\[
      \sigma_1(\sig) L(\p \sig) \leq \sigma_1(D) L(\p D).
\]
This type of eigenvalue estimate originated in Szeg\"o's estimate \cite{Sz} of the first Neumann eigenvalue of the Laplacian for a simply connected domain in $\mathbb{R}^2$.
In the following we extend the bound of Weinstock to arbitrary Riemannian surfaces with boundary.
This estimate for $\sigma_1$ is analogous to the estimates of Hersch \cite{H} and Yang-Yau \cite{YY} for the first eigenvalue of the Laplacian on surfaces.

\begin{theorem} \label{theorem:yy}
Let $\sig$ be a compact surface of genus $\gamma$ with $k$ boundary components. Let 
$\sigma_1$ be the first non-zero eigenvalue of the Dirichlet-to-Neumann operator on $\sig$ with metric $g$. Then
\[
       \sigma_1 L(\p \sig) \leq 2 (\gamma+k) \pi.
\]
\end{theorem}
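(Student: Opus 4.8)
The plan is to bound $\sigma_1$ from above by exhibiting good test functions in the variational characterization, using that in dimension two the Dirichlet energy $\int_\sig|\n\hat u|^2\,da$ is a conformal invariant. Regard $\sig$ as a compact bordered Riemann surface with the conformal structure determined by $g$. By the theorem of Ahlfors \cite{A}, in the sharpened form due to Gabard \cite{G}, there is a proper holomorphic map $\vp\colon\sig\to D$ onto the unit disk whose degree $d$ satisfies $d\le\gamma+k$; in particular $\vp$ extends continuously to $\p\sig$ with $\vp(\p\sig)\subset\p D$, so $|\vp|\equiv1$ on $\p\sig$.

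Next I would balance $\vp$ by composing it with a conformal automorphism of $D$, in the manner of Hersch, Szeg\"o and Weinstock. For $a\in D$ let $\phi_a(z)=(z-a)(1-\bar a z)^{-1}$ and define
\[
   G(a)=\frac{1}{L(\p\sig)}\il{\p\sig}\phi_a\circ\vp\;ds\ \in\ \overline D\subset\R^2.
\]
Since $|\phi_a\circ\vp|\le1$, $G$ is continuous on $D$; and since $\phi_a(z)\to-e^{i\theta}$ as $a\to e^{i\theta}$ for each fixed $z\in\p D$ with $z\neq e^{i\theta}$, while $\vp^{-1}(e^{i\theta})\cap\p\sig$ is finite, dominated convergence shows $G$ extends continuously to $\overline D$ with $G(e^{i\theta})=-e^{i\theta}$. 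This boundary map has winding number one about the origin, so by a degree argument $G(a_0)=0$ for some $a_0\in D$. Put $u:=\phi_{a_0}\circ\vp\colon\sig\to D$; then $u$ is holomorphic, $|u|\equiv1$ on $\p\sig$, and $\il{\p\sig}u\,ds=0$, i.e.\ each coordinate function $u^1,u^2$ has mean zero on $\p\sig$.

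Since $u^1,u^2$ are harmonic on $\sig$ they are their own harmonic extensions, so the variational characterization gives $\sigma_1\il{\p\sig}(u^j)^2\,ds\le\il{\sig}|\n u^j|^2\,da$ for $j=1,2$. Adding these, and using $|u|\equiv1$ on $\p\sig$ together with the identity $|\n u|^2=2J_u$ valid pointwise for the holomorphic map $u$,
\[
   \sigma_1 L(\p\sig)=\sigma_1\il{\p\sig}|u|^2\,ds\le\il{\sig}|\n u|^2\,da=2\il{\sig}J_u\,da=2d\pi\le2(\gamma+k)\pi,
\]
where $\il{\sig}J_u\,da=d\cdot\mathrm{Area}(D)=d\pi$ since $u$ is a degree-$d$ proper map onto $D$. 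This completes the argument.

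The one genuinely delicate point is the balancing step: because $\vp$ already carries $\p\sig$ into $\p D$, the automorphisms $\phi_a$ degenerate as $a\to\p D$, so one must check carefully that the center-of-mass map $G$ extends continuously to $\overline D$ with the stated boundary values in order for the topological (Brouwer degree) argument to apply; this plays the role here of Hersch's lemma. Everything else (the conformal invariance of the energy, the area computation via holomorphy, and the use of the variational principle) is routine. Note that equality forces $d=1$, so that $\sig$ is conformally a disk and $u$ a Riemann map, which recovers Weinstock's theorem.
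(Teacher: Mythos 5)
Your proof is correct and follows essentially the same route as the paper: the Ahlfors--Gabard proper holomorphic map of degree at most $\gamma+k$, a Hersch-type balancing by disk automorphisms (which you carry out in detail, where the paper defers to its Lemma 6.1), and the conformal invariance of the Dirichlet energy combined with the variational characterization of $\sigma_1$. One minor caveat: your closing remark that equality forces $d=1$ is unjustified --- equality in your chain forces $d=\gamma+k$ together with the $\vp^j$ being eigenfunctions, and ruling out equality for $\gamma=0$, $k\geq 2$ requires the separate nodal-domain argument of Theorem \ref{theorem:yysharp} --- but this does not affect the proof of the stated inequality.
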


\begin{remark}
When $\gamma=0$ and $k=1$ this reduces to Weinstock's estimate and is sharp.
\end{remark}

\begin{proof}
Any compact surface with boundary can be properly conformally branched over the disk  $D$. Specifically, there exists an Ahlfors function; that is, a proper conformal branched cover
$\vp: \sig \rightarrow D$, of degree at most $\gamma +k$. This originates in Ahlfors \cite{A} 
where it is shown that such a map exists of degree at most $2\gamma+k$. In the recent paper
of A. Gabard \cite{G} a new construction of such maps is given with the improved bound on
the degree. We may now use automorphisms of the disk to balance the map $\vp$. Specifically (see Lemma \ref{lemma:hersch}) we can assume that $\vp=(\vp^1, \vp^2)$ satisfies
\[
      \int_{\p \sig} \vp^i \; ds =0
\] 
for $i=1, 2$ where $ds$ is the element of arclength for $g$. Thus $\vp^1$ and $\vp^2$ can be used as test functions, so let $\hat{\vp}^i$ be the harmonic extension of $\vp^i |_{\p \sig}$. Then, by the variational characterization of $\sigma_1$,
\begin{equation} \label{eq:var-char}
     \sigma_1 \int_{\p \sig} (\vp^i)^2 \; ds 
                 \leq \int_\sig |\n \hat{\vp}^i |^2 \; da   
                 \leq \int_\sig |\n \vp^i |^2 \; da.   
\end{equation}
where $da$ is the area measure of $g$. Since $\vp$ is conformal, 
\[
      \sum_{i=1}^2 \int_\sig | \n \vp^i |^2 \; da 
      = 2 A( \vp(\sig)) \\
      \leq 2(\gamma +k) \pi.
\]
On the other hand, since $\vp$ is proper we have $\vp(\p \sig) \subset \p D$, so
\[
     \sum_{i=1}^2 \int_{\p \sig} (\vp^i)^2 \; ds = \int_{\p \sig} ds = L(\p \sig).
\]
Therefore (\ref{eq:var-char}) gives
\[
    \sigma_1 L(\p \sig) \leq 2(\gamma+k)\pi.
\]
\end{proof}
When the genus of the surface is zero, we show that the bound given in Theorem \ref{theorem:yy} is not sharp. In the next two sections we attempt to find a sharp bound in the case of surfaces homeomorphic to an annulus ($\gamma=0,\ k=2$).

\begin{theorem} \label{theorem:yysharp}
Let $\sig$ be a compact surface of genus $0$ with $k$ boundary components, $k \geq 2$. Let 
$\sigma_1$ be the first non-zero eigenvalue of the Dirichlet-to-Neumann operator on $\sig$ with metric $g$. Then
\[
       \sigma_1 L(\p \sig) < 2 k \pi.
\]
\end{theorem}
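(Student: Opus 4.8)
The plan is to argue by contradiction: assume $\sigma_1 L(\p\sig)=2k\pi$, the bound of Theorem \ref{theorem:yy} with $\gamma=0$, and extract enough rigidity from the equality case to violate the fact that $\sigma_1$ is the \emph{first} nonzero eigenvalue.

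First I would rerun the proof of Theorem \ref{theorem:yy} with a balanced Gabard map $\vp=(\vp^1,\vp^2)\colon\sig\to D$ of degree $d\le k$. The chain $\sigma_1 L(\p\sig)=\sigma_1\sum_i\il{\p\sig}(\vp^i)^2\,ds\le\sum_i\il{\sig}|\n\vp^i|^2\,da=2A(\vp(\sig))\le 2k\pi$ must then be equalities. Since each $\vp^i$ is already harmonic, the first inequality is just the variational characterization of $\sigma_1$ applied to the balanced functions $\vp^i$, so equality forces $d=k$ and forces $\vp^1=\operatorname{Re}\vp$ and $\vp^2=\operatorname{Im}\vp$ to be first Steklov eigenfunctions of $(\sig,g)$; in particular a Gabard map of degree $<k$ would already have given the strict inequality, so only the degree-$k$ case must be treated.

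Next I would show the boundary becomes completely standard. On $\p\sig$ one has $|\vp|\equiv 1$, and $\der{\nu}\vp^i=\sigma_1\vp^i$ gives $\der{\nu}|\vp|^2=2\sigma_1$, hence $\der{\nu}\log|\vp|=\sigma_1$. Since $\log\vp$ is holomorphic near $\p\sig$ with vanishing real part there, the Cauchy--Riemann equations convert this into $\der{\tau}(\arg\vp)=\pm\sigma_1$ along each boundary curve $\gamma_j$ in $g$-arclength. Integrating, $\sigma_1 L(\gamma_j)=2\pi d_j$ where $d_j\ge 1$ is the winding degree of $\vp|_{\gamma_j}$; as $\sum_j d_j=\deg\vp=k$ and there are $k$ boundary components, every $d_j=1$. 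Rescaling $g$ so that $\sigma_1=1$, each $\gamma_j$ then has length $2\pi$ and $\vp$ maps it isometrically at constant speed onto $\p D$, while near $\p\sig$ the map $\vp$ is a biholomorphism onto a collar of $\p D$. Thus the conformal type of $\sig$ together with the (round, total mass $2\pi$) measure on each $\gamma_j$ is forced to be the standard one, $\sigma_1(\sig,g)$ equals the first Steklov eigenvalue of that conformal/boundary datum, and $\operatorname{Re}\vp|_{\gamma_j}$ is the pure first Fourier mode $\cos(\theta+c_j)$.

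For $k=2$ this closes the argument: the conformal type is the cylinder $[0,T]\times S^1$ with $S^1$ of circumference $2\pi$, and the first Steklov eigenvalue of the flat cylinder is $\min(2/T,\tanh(T/2))\le\tanh(T/2)<1$, contradicting $\sigma_1=1$; hence $\sigma_1 L<4\pi$ for every annulus. For $k\ge 3$ the same rigid picture reduces the theorem to showing that a genus-$0$ surface with $k\ge 2$ boundary circles, each carrying the round measure of mass $2\pi$ and admitting a degree-$k$ proper holomorphic map to $D$, has first Steklov eigenvalue strictly below $1$; this is the step I expect to be the main obstacle, since for $k\ge 3$ there is no model whose spectrum can be written down explicitly. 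I would attack it using the interior branch points of $\vp$, of which Riemann--Hurwitz produces $2k-2\ge 2$: from $\operatorname{Re}\vp$ and the two nodal domains it cuts out (each a disk mapping properly onto a half-disk of $D$) one should be able to manufacture a balanced test function of Rayleigh quotient $<1$; alternatively, $\operatorname{Re}\vp$ has $2k$ nodal arcs on $\p\sig$, which against a Courant-type bound forces the multiplicity of $\sigma_1$ to be at least $k$, to be contradicted by the known upper bound for that multiplicity on a genus-$0$ surface. In either route the point beyond Theorem \ref{theorem:yy} is that having two or more boundary components leaves enough slack to undercut the eigenvalue $1$ realized by the coordinate functions of an honest branched cover.
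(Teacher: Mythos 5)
Your setup is sound and reproduces the first stage of the paper's argument: if $\sigma_1 L(\p \sig)=2k\pi$, then every combination $u=a\vp^1+b\vp^2$ is a first Steklov eigenfunction, the degree of $\vp$ must be exactly $k$, and $\vp$ restricts to a degree-one map on each boundary component (you get this via $\log\vp$ and the Cauchy--Riemann equations; the paper gets it more directly by counting the $k$ preimages of a point of the circle, one on each of the $k$ boundary curves). But the proof is not completed. For $k\ge 3$ you explicitly leave the contradiction as a hoped-for step and offer two routes, neither carried out; the second (inferring a large multiplicity of $\sigma_1$ from the $2k$ boundary sign changes of $\operatorname{Re}\vp$) does not obviously work, since the Courant-type bound controls nodal domains rather than boundary sign changes, and $\vp^1,\vp^2$ only span a two-dimensional eigenspace. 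The idea you are missing is the paper's: the nodal set of $u=a\vp^1+b\vp^2$ is $\vp^{-1}(\ell)$ for the line $\ell=\{ax+by=0\}$, and a first Steklov eigenfunction has at most two nodal domains (by min-max; see \cite{KS}). On a genus-$0$ surface every ``circuit'' of nodal arcs separates, and a circuit visiting fewer than $k$ boundary components already yields at least three nodal domains; choosing $a,b$ so that $\ell$ passes through the image of an interior branch point (these exist by Riemann--Hurwitz, as you note) then forces, via a limiting argument on the level sets $\{u=\pm\epsilon\}$, a circuit of length less than $k$, hence the contradiction. This argument is uniform in $k\ge 2$ and requires no spectral computation.

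Separately, your $k=2$ shortcut has an unjustified step. From $\der{\tau}\arg\vp=\pm\sigma_1$ you conclude that the surface carries the ``standard'' boundary datum on the conformal cylinder $[0,T]\times S^1$ and then quote the rotationally symmetric spectrum $\min(2/T,\tanh(T/2))$ from Section \ref{section:rotsym}. That computation requires the boundary arclength density to be constant in the conformal angle $\theta$ of the cylinder, whereas what you have shown is that it is constant in $\arg\vp$; for a degree-two proper holomorphic map the restriction $\arg\vp|_{\gamma_j}$ need not be an affine function of $\theta$, so the identification with the rotationally symmetric model is not established.
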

\begin{proof}
Let $\vp:\sig\to D$ be the proper conformal map of degree $k$ used in the proof of 
Theorem \ref{theorem:yy}. If it were true that $\sigma_1 L(\p \sig) =2 k \pi$, then
it would follow that the function $u=a\vp^1+b\vp^2$ is a first Steklov eigenfunction
for any real numbers $a,b$ which are not both $0$. The nodal set of this function
is the preimage under $\vp$ of the line $ax+by=0$. Now from the nodal domain
theorem for Steklov eigenfunctions which follows directly from the minmax characterization
of Steklov eigenvalues (see \cite{KS}) there can be at most two connected components
of $\sig\setminus\{u=0\}$. We complete the proof by showing that $a,b$ can be chosen
so that there are at least three components.

We first note that the map $\vp$ is a diffeomorphism from each boundary component
of $\sig$ to the unit circle $C$ since for any point $z\in C$ the set $\vp^{-1}(z)$ consists of exactly
$k$ points all of which lie on $\p\sig$. Since the map $\vp$ must be surjective from each
boundary component to $C$, it follows that exactly one of the points of $\vp^{-1}(z)$ lies
on each component of $\p\sig$. Note that this also shows that $k$ is the smallest possible
degree of a proper conformal map from $\sig$ to $D$.

To analyze the nodal domains of the function $u=a\vp^1+b\vp^2$, we observe that if there
are no branch points in the set $\Gamma=\{u=0\}$, then $\Gamma$ is a properly embedded
curve which is a union of $k$ segments each with endpoints on $\p\sig$. Moreover, each component 
of $\p\sig$ contains precisely two endpoints. A finite sequence of arcs of $\Gamma$ beginning and
ending on the same component of $\Gamma$ will be called a {\it circuit}; for example, an
arc with both endpoints on the same component of $\p\sig$ is a circuit of length $1$ while
a sequence going from a component $\sigma_1$ to $\sigma_2$ on to $\sigma_3$ and back to 
$\sigma_1$ would be a circuit of length $3$. Since $\sig$ has genus $0$ every circuit divides
$\sig$. If the circuit has length less than $k$, then one of the components has closure which
is disjoint from the components of $\p\sig$ which the circuit does not visit. We will call this
component the {\it small} component. In case there is a circuit of length less than $k$, there
is also a circuit among the remaining components of $\p\sig$, and the small components
of these two have disjoint closure. In particular it follows that $\sig\setminus\Gamma$ has
at least $3$ components contradicting the assumption that $u$ is  first Steklov eigenfunction.
Thus we have shown that the set $\Gamma$ is made up of a circuit of length $k$.

On the other hand, we may choose $a,b$ so that $\Gamma$ contains at least one
branch point. In this case, for $\epsilon\neq 0$ sufficiently small, the sets 
$\Gamma_\epsilon=\{u=\epsilon\}$ are free of branch points. We again refer to
an embedded curve $\gamma$ in $\Gamma$ as a circuit if $\gamma$ consists of a
sequence of embedded segments beginning and ending at the same component 
of $\p\sig$. Any circuit in $\Gamma_\epsilon$ converges as $\epsilon$ goes to zero
to a circuit in $\Gamma$. It follows as above that $\Gamma_\epsilon$ must contain
a circuit of length $k$. On the other hand, as $\epsilon$ goes to zero, two distinct
arcs from $\Gamma_{-\epsilon}$ and $\Gamma_\epsilon$ must come together at
a branch point. Without loss of generality, assume that the order of the circuit
$\Gamma_{-\epsilon}$ is given by $\sigma_1,\sigma_2,\ldots,\sigma_k$ and that the
arc from $\sigma_1$ to $\sigma_2$ joins at a branch point with the arc from $\sigma_i$ to 
$\sigma_j$ of $\Gamma_\epsilon$ where $i<j$. Now if $i$ is $1$ or $2$, then $\Gamma$
contains a circuit of length $1$, so we may assume that $i\geq 3$. Thus $j\geq 4$,
and we have a circuit in $\Gamma$ from $\sigma_1$ to $\sigma_2$ to $\sigma_j$
to $\sigma_{j+1}$ and in order back to $\sigma_1$. This circuit has length less than
$k$, and this contradicts the assumption that $u$ is a first Steklov eigenfunction. 
\end{proof}

\section{Rotationally symmetric  metrics on the annulus} \label{section:rotsym}

Consider a metric of the form $g=dr^2+a^2(r) d\theta^2$ where $0<r_1<r<r_2$, $\theta \in S^1$. Such a metric is isometric to a product $[0,T] \times S^1$  for some $T>0$, with metric 
$g=f^2(t)(dt^2+d\theta^2)$ for a positive function $f(t)$. Notice that a harmonic function $u(t,\theta)$ is harmonic with respect to the flat metric $dt^2+d\theta^2$ and thus satisfies the equation 
$u_{tt}+u_{\theta \theta}=0$. The outward unit normal vector on $\Gamma_0=\{t=0\}$ is given by 
$\nu=-f^{-1}(0) \der{t}$ and on $\Gamma_T=\{t=T\}$ by $\nu=f(T)^{-1}\der{t}$. On the other hand the arclength element is $f(0)d\theta$ at $t=0$ and $f(T)d\theta$ at $t=T$. Therefore
\[
    \int_{\p \sig} u u_\nu \;ds = \int_{\Gamma_T} u u_t \; d\theta
                                                  -\int_{\Gamma_0} u u_t \; d\theta.
\]
Finally the $L^2$ norm of $u$ on the boundary is given by 
\[
     \int_{\p \sig} u^2 \;ds = \int_{\Gamma_0} u^2 f(0) \; d\theta + \int_{\Gamma_T} u^2 f(T) \;d\theta.
\]
Thus there is no loss of generality for computing the eigenvalues of the Dirichlet-to-Neumann map by replacing $g$ by the flat metric given by choosing $f$ to be the linear function 
\[
       f(t)=\left(1-\frac{t}{T}\right)f(0) + \frac{t}{T}f(T).
\]
We will assume that our metric $g$ is of this form.

To compute the Dirichlet-to-Neumann spectrum we separate variables and look for harmonic functions of the form $u(t,\theta)=\alpha(t)\beta(\theta)$. By standard methods we obtain solutions for each nonnegative integer $n$ given by linear combinations of the functions $\sinh(nt)\sin(n\theta)$,
$\sinh(nt)\cos(n\theta)$, $\cosh(nt)\sin(n\theta)$, and $\cosh(nt)\cos(n\theta)$ if $n \geq 1$. For $n=0$ the solutions are linear combinations of the functions 1 and $t$. In order to be an eigenfunction for the Dirichlet-to-Neumann map we must have $u_\nu=\lambda u$ on $\p \sig$, or
$u_t=-\lambda f(0)u$ on $\Gamma_0$ and $u_t=\lambda f(T)u$ on $\Gamma_T$.
For $n=0$ we have $\alpha(t)=a+bt$ and the conditions become
$b=-\lambda f(0)a$, $b=\lambda f(T)(a+bT)$.
There are two values of $\lambda$ for which these have a nonzero solution: $\lambda^{(1)}_0=0$ with constant eigenfunctions and $\lambda^{(2)}_0=(f(0)f(T)T)^{-1}(f(0)+f(T))$ with eigenfunctions spanned by $1+bt$ where $b=-\lambda^{(2)}_0f(0)$. 

For $n \geq1$ the eigenfunctions have 
$\alpha(t)=a \sinh (nt) + b \cosh(nt)$ and the conditions are 
\begin{align*}
     na &=-\lambda f(0)b \\
     na \cosh(nT)+nb\sinh(nT)&=\lambda f(T)(a \sinh(nT)+b\cosh(nT)).
\end{align*}
Using the first equation to eliminate $a$ and dividing by $b$ (which must be nonzero in this case) we get the quadratic equation for $\lambda$
\[
      \lambda^2-n[f(0)^{-1}+f(T)^{-1}] \coth(nT) \lambda + n^2 f(0)^{-1}f(T)^{-1}=0.
\]
This equation has two positive roots $\lambda^{(1)}_n < \lambda^{(2)}_n$ given by
\begin{align*}
    \lambda^{(1)}_n&=\frac{n}{2}\left( [f(0)^{-1}+f(T)^{-1}] \coth(nT)- \sqrt{D} \right) \\
    \lambda^{(2)}_n&=\frac{n}{2}\left( [f(0)^{-1}+f(T)^{-1}] \coth(nT)+ \sqrt{D} \right)
\end{align*}
where
\[
    D = [ f(0)^{-1}+f(T)^{-1}]^2 \coth^2(nT)-4f(0)^{-1}f(T)^{-1}. 
\] 
Note that each of $\lambda^{(1)}_n$, $\lambda^{(2)}_n$ has multiplicity two. The expression for $\lambda^{(1)}_n$ can be rewritten
\[
\lambda^{(1)}_n=2nf(0)^{-1}f(T)^{-1}\left[(f(0)^{-1}+f(T)^{-1})\coth(nT)+\sqrt{D}\right]^{-1}.
\]
Since $\coth$ is decreasing for positive arguments, we can see that $\lambda^{(1)}_n$ is an increasing function of $n$. Thus if we want to find the smallest nonzero eigenvalue $\sigma_1$ of the Dirichlet-to-Neumann map we need only consider $n=0,\,1$. We must have either $\sigma_1=\lambda^{(2)}_0$ or 
$\sigma_1=\lambda^{(1)}_1$, and $\sigma_1=\min \{\lambda^{(2)}_0, \, \lambda^{(1)}_1 \}$. We consider the ratio $\lambda^{(1)}_1/\lambda^{(2)}_0$ and compute from the expression above
\begin{align*}
    \frac{\lambda^{(1)}_1}{\lambda^{(2)}_0}
    & =\frac{T}{2} \left[ \coth T - \left( \coth^2T -\frac{4f(0)f(T)}{(f(0)+f(T))^2}\right)^{1/2} \right]\\
    & =\frac{2Tf(0)f(T)}{(f(0)+f(T))^2}\left[\coth(T)+ \left( \coth^2T -\frac{4f(0)f(T)}{(f(0)+f(T))^2}\right)^{1/2}\right]^{-1}.
\end{align*}
If we let $\alpha=f(0)/f(T)$ denote the ratio of the boundary lengths, then
the this may be written
\[
    \frac{\lambda^{(1)}_1}{\lambda^{(2)}_0}
      =\frac{2T\alpha}{(\alpha+1)^2}\left[\coth(T)+ \left( \coth^2T -\frac{4\alpha}{(\alpha+1)^2}\right)^{1/2}\right]^{-1}.
\]
If we fix the value of $\alpha$, this expression is an increasing function of $T$ which tends to 0 as $T$ goes to 0 and to infinity as $T$ goes to infinity. Thus there is a unique $T(\alpha)>0$ such that $\sigma_1=\lambda^{(1)}_1$ for $T \leq T(\alpha)$ and $\sigma_1= \lambda^{(2)}_0$ for $T \geq 
T(\alpha)$. Thus for $T >T(\alpha)$ the multiplicity of $\sigma_1$ is one, for $T < T(\alpha)$ the multiplicity is two, and for $T=T(\alpha)$ the multiplicity is three. If we fix $f(0)$ and $f(T)$ we see that $\sigma_1$ is maximized for $T=T(\alpha)$ since $\lambda^{(2)}_0$ is a decreasing function of $T$ and $\lambda_1^{(1)}$ is an increasing function of $T$. It follows that if we fix $\alpha$,
then $\sigma_1L$ is maximized for $T=T(\alpha)$. 

The following theorem summarizes
the results for rotationally symmetric metrics. We give a geometric proof which identifies
the maximal annulus for a given $\alpha$ as a specific piece of the catenoid and shows that
$\sigma_1 L$ is maximized when $\alpha=1$. We consider the catenoid parametrized on
$(-\infty,\infty)\times S^1$ given by 
\begin{align*}
    x_1(t,\theta)&=\cosh t \cos \theta \\
    x_2(t,\theta)&=\cosh t \sin \theta \\
    x_3(t,\theta) &= t.
\end{align*}
The induced metric is $g=\cosh^2(t)(dt^2+d\theta^2)$, so the portion between $t=a$
and $t=b$ is conformally equivalent to $[a,b]\times S^1$.

\begin{theorem}\label{theorem:rotational}
Given any $a\in\mathbb{R}$, let $t_1$ and $t_2$ be the positive solutions of $t_1=\coth(t_1+a)$ and $t_2=\coth(t_2-a)$. If we let $\alpha=t_1/t_2$ then $\alpha$ is a decreasing function of
$a$ with range all of $\mathbb{R}_+$ and we have $T(\alpha)=t_1+t_2$ and the maximum value of 
$\sigma_1 L$ with $\alpha$ fixed is $2\pi(t_1^{-1}+t_2^{-1})$. Furthermore the maximum of 
$\sigma_1 L$ over all rotationally symmetric metrics on the annulus occurs uniquely when $a=0$, hence $\alpha=1$, and it is equal to $4\pi/t_1$ where $t_1=\cosh t_1$. The corresponding optimal value of $T$ is $T(1)=2t_1$. The numerical value of $t_1$ is approximately $1.2$.
\end{theorem}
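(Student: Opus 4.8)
The plan is to reduce everything to the explicit spectral formulas of Section~\ref{section:rotsym} together with elementary properties of $\coth$ on $(0,\infty)$. Recall from that section that, for a rotationally symmetric metric, $\sigma_1 L$ depends only on the conformal modulus $T$ and the boundary length ratio $\alpha=f(0)/f(T)$, that for fixed $\alpha$ it is maximized at the unique value $T=T(\alpha)$ at which $\lambda^{(1)}_1=\lambda^{(2)}_0$ (where $\sigma_1$ then has multiplicity three), and that $\sigma_1 L|_{T=T(\alpha)}=\lambda^{(2)}_0\cdot L$. First I would check that $t_1,t_2$ are well defined: on $\{t>\max(0,-a)\}$ the function $g(t)=t-\coth(t+a)$ is strictly increasing, since $g'=1+\operatorname{csch}^2(t+a)>0$; it is negative near the left endpoint and tends to $+\infty$ as $t\to\infty$, so it has a unique zero $t_1$, necessarily positive, and likewise for $t_2$. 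Differentiating $t_1=\coth(t_1+a)$ implicitly and using $1+\operatorname{csch}^2 x=\coth^2 x$ gives
\[
\frac{dt_1}{da}=-\operatorname{sech}^2(t_1+a)<0,\qquad \frac{dt_2}{da}=\operatorname{sech}^2(t_2-a)>0,
\]
so $\alpha=t_1/t_2$ is strictly decreasing in $a$. Since at $a=0$ we have $t_1=t_2=:t_\ast$, the unique fixed point of $\coth$ (numerically $t_\ast\approx 1.2$), so $\alpha=1$, while $\alpha\to0$ as $a\to+\infty$ and $\alpha\to\infty$ as $a\to-\infty$, the map $a\mapsto\alpha$ is a decreasing bijection $\R\to\R_+$.

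The heart of the argument is the identity $T(\alpha)=t_1+t_2$. Put $u=t_1+a$ and $v=t_2-a$, so $t_1=\coth u$, $t_2=\coth v$, and from the defining relations $a=u-\coth u=\coth v-v$; adding the last two equalities gives the key identity
\[
u+v=\coth u+\coth v=t_1+t_2=:T.
\]
The addition formula $\coth(u+v)=(\coth u\coth v+1)/(\coth u+\coth v)$ now yields $\coth T=(t_1 t_2+1)/(t_1+t_2)$, and, using $\alpha=t_1/t_2$,
\[
\coth^2 T-\frac{4\alpha}{(\alpha+1)^2}=\frac{(t_1 t_2+1)^2-4t_1 t_2}{(t_1+t_2)^2}=\frac{(t_1 t_2-1)^2}{(t_1+t_2)^2},
\]
a perfect square; moreover $t_1 t_2>1$, because $T=u+v=\coth u+\coth v>2$ forces $t_1 t_2=T\coth T-1>T-1>1$. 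Hence, by the formula for $\lambda^{(1)}_1/\lambda^{(2)}_0$ from Section~\ref{section:rotsym},
\[
\frac{\lambda^{(1)}_1}{\lambda^{(2)}_0}\bigg|_{T=t_1+t_2}=\frac{T}{2}\left[\coth T-\frac{t_1 t_2-1}{t_1+t_2}\right]=\frac{T}{2}\cdot\frac{2}{t_1+t_2}=1,
\]
so $\lambda^{(1)}_1=\lambda^{(2)}_0$ there, which by Section~\ref{section:rotsym} is exactly the statement $T(\alpha)=t_1+t_2$. Combining this with $\lambda^{(2)}_0=(f(0)+f(T))/(f(0)f(T)\,T(\alpha))$, $L=2\pi(f(0)+f(T))$ and $(f(0)+f(T))^2/(f(0)f(T))=(\alpha+1)^2/\alpha$, the maximum of $\sigma_1 L$ for fixed $\alpha$ is $2\pi(\alpha+1)^2/(\alpha\,T(\alpha))$, which upon substituting $\alpha=t_1/t_2$ and $T(\alpha)=t_1+t_2$ collapses to $2\pi(t_1+t_2)/(t_1 t_2)=2\pi(t_1^{-1}+t_2^{-1})$.

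Finally, to maximize over all rotationally symmetric metrics it remains to maximize $M(a)=2\pi(t_1^{-1}+t_2^{-1})=2\pi(\tanh u+\tanh v)$ over $a\in\R$. From $du/da=1+dt_1/da=\tanh^2 u$ and $dv/da=dt_2/da-1=-\tanh^2 v$, writing $p=\tanh^2 u$ and $q=\tanh^2 v$,
\[
\frac{1}{2\pi}\frac{dM}{da}=\operatorname{sech}^2 u\,\tanh^2 u-\operatorname{sech}^2 v\,\tanh^2 v=(p-p^2)-(q-q^2)=(p-q)(1-p-q).
\]
Now $p-q$ has the sign of $u-v$, which has the sign of $a$ because $u-v$ vanishes at $a=0$ and $\frac{d}{da}(u-v)=\tanh^2 u+\tanh^2 v>0$. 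And $p+q=t_1^{-2}+t_2^{-2}=\big((t_1+t_2)^2-2t_1 t_2\big)/(t_1 t_2)^2$; substituting $t_1+t_2=T$ and $t_1 t_2=T\coth T-1$ one finds $p+q>1\iff T^2+1>T^2\coth^2 T\iff \sinh T>T$, which always holds. Thus $1-p-q<0$ for every $a$, so $dM/da>0$ for $a<0$ and $dM/da<0$ for $a>0$: $M$ has a strict global maximum at $a=0$, where $t_1=t_2=t_\ast$ solves $t_\ast=\coth t_\ast$, the maximum value is $4\pi/t_\ast$, and the optimal modulus is $T(1)=2t_\ast\approx2.4$; since $a\mapsto\alpha$ is a bijection, $\alpha=1$ is the unique maximizing ratio.

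I expect the principal obstacle to be the bookkeeping in the second paragraph: introducing $u,v$, observing that the defining relations force $u+v=t_1+t_2$, and recognizing that the discriminant in the formula for $\lambda^{(1)}_1/\lambda^{(2)}_0$ becomes a perfect square so that the whole expression telescopes to $1$; the monotonicity computations in the first and third paragraphs are routine once the derivative identities $dt_i/da=\mp\operatorname{sech}^2(\cdot)$ and the inequality $\sinh T>T$ are in place. The geometric content behind the middle step is that the extremal annulus of ratio $\alpha$ has a three-dimensional first eigenspace; suitably normalizing these three eigenfunctions gives a conformal --- hence minimal --- rotationally symmetric immersion of the annulus into $\R^3$, which must be a piece of a catenoid, and the equations of the theorem record precisely which piece (for $a=0$ this piece, after rescaling, meets $\partial B^3$ orthogonally and is the critical catenoid of Lemma~\ref{lemma:minimal}). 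One could organize the entire proof around this picture, but the computation above is self-contained given Section~\ref{section:rotsym}.
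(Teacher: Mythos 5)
Your argument is correct, but it reaches the conclusion by a genuinely different route from the paper's. The paper identifies the extremal annulus geometrically: it revolves the graph of $x=\cosh(t-a)$ between the parameters $-t_1$ and $t_2$ at which the tangent line passes through the origin, observes that the two boundary circles then lie on spheres centered at the origin of radii $R_1,R_2$, so that after rescaling the metric near one boundary component the three coordinate functions become Steklov eigenfunctions with common eigenvalue $R_1^{-1}$; since these eigenfunctions belong to both the $n=0$ and $n=1$ families of Section~\ref{section:rotsym}, this forces $\lambda_0^{(2)}=\lambda_1^{(1)}$, hence $T(\alpha)=t_1+t_2$, and the values $\alpha=t_1/t_2$ and $\sigma_1L=2\pi(t_1^{-1}+t_2^{-1})$ are read off from the geometry. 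You instead verify $T(\alpha)=t_1+t_2$ purely algebraically, via $u=t_1+a$, $v=t_2-a$, the resulting identity $u+v=\coth u+\coth v$, and the addition formula for $\coth$, which makes the discriminant in the formula for $\lambda_1^{(1)}/\lambda_0^{(2)}$ the perfect square $\bigl((t_1t_2-1)/(t_1+t_2)\bigr)^2$ (with $t_1t_2>1$ justifying the choice of root); this is self-contained given Section~\ref{section:rotsym}, though it conceals why the critical catenoid appears, a point your closing remark supplies. Where your route genuinely improves on the paper is the final maximization over $a$: the paper analyzes $f'(a)=Q(t_1^{-2})-Q(t_2^{-2})$ with $Q(x)=x(1-x)$ by splitting into ranges of $a$ and closes with the numerical verification $t_1(0)+\sqrt{t_1^2(0)-1}<2$, whereas your factorization $\tfrac{1}{2\pi}M'(a)=(p-q)(1-p-q)$ combined with the observation that $p+q>1$ reduces, under the constraint $t_1t_2=T\coth T-1$, to the universal inequality $\sinh T>T$, giving a clean, numerics-free proof that $a=0$ is the unique maximum. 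One small point: the statement's ``$t_1=\cosh t_1$'' is a typo for $t_1=\coth t_1$, which you have silently and correctly repaired.
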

\begin{proof}
The values $-t_1$ and $t_2$ are the values of $t$ at which the tangent line at $(t,x)$ to the graph of
$x=\cosh(t-a)$ passes through the origin. At such a point we would have $\cosh(t-a)/t=\sinh(t-a)$,
and we call the positive solution $t_2$ and the negative solution $-t_1$ with $t_1>0$. We then
have $-t_1=\coth(-t_1-a)=-\coth(t_1+a)$.

If we let $\Sigma$ be the portion of the surface of revolution gotten by revolving the graph
of $x=\cosh(t-a)$, then the choice of $t_1$ and $t_2$ guarantee than the conormal vector
of $\Sigma$ at each boundary component is parallel to the position vector. If we denote
by $\Gamma_1$ the boundary component corresponding to $-t_1$ and by $\Gamma_2$ the
boundary component corresponding to $t_2$, we then see that $\Gamma_1$
is contained in the sphere with center at the origin of radius $R_1=\sqrt{t_1^2+\cosh^2(t_1+a)}$
while $\Gamma_2$ is contained in the sphere of radius $R_2=\sqrt{t_2^2+\cosh^2(t_2-a)}$. Thus
if we take the induced metric $g$, then the coordinate functions $X=(x_1,x_2,x_3)$ are harmonic and satisfy the boundary conditions $\partial X/\partial \nu=(R_i)^{-1}X$ on $\Gamma_i$ where
$\nu$ denotes the unit conormal vector. It follows that if we rescale the metric near $\Gamma_2$
by a factor of $R_1/R_2$, then $(R_1)^{-1}$ is a first Steklov eigenvalue of multiplicity three.
The length of $\Gamma_1$ is $2\pi\cosh(t_1+a)$ while the length of $\Gamma_2$ in the
rescaled metric is $2\pi(R_1/R_2)\cosh(t_2-a)$. Thus the ratio $\alpha$ of the length of 
$\Gamma_2$ to that of $\Gamma_1$ is equal to the ratio of $\cosh(t_2-a)/R_2$ to
$\cosh(t_1+a)/R_1$. Using the definitions of the terms we see directly that $\cosh(t_1+a)/R_1=
t_1^{-1}$ and $\cosh((t_2-a)/R_2=t_2^{-1}$ and therefore $\alpha=t_1/t_2$. Based on the
discussion above we have shown that $T(\alpha)=t_1+t_2$. Now it is clear that $t_1$ is a
decreasing function of $a$ which tends to $\infty$ as $a$ tends to $-\infty$ and to $1$
as $a$ tends to $\infty$ while $t_2$ is an increasing function of $a$ which tends to $\infty$ as $a$ tends to $\infty$ and to $1$ as $a$ tends to $-\infty$. Thus we
see that $\alpha$ is a decreasing function which tends to $\infty$ as $a$ tends to $-\infty$ and 
to $0$ as $a$ tends to $\infty$. Now the value of $\sigma_1 L$ is given by $2\pi (R_1)^{-1}[\cosh(t_1+a)+(R_1/R_2)\cosh(t_2-a)]$, and this is equal to $2\pi(t_1^{-1}+t_2^{-1})$.

Finally we must show that the maximum of $\sigma_1 L$ occurs when $a=0$. From the
previous paragraph this amounts to showing that the function $f(a)=t_1^{-1}+t_2^{-1}$ is
maximized at $a=0$. From the definitions it is clear that $t_2(-a)=t_1(a)$, so it follows
that $f(-a)=f(a)$. Thus it suffices to show that $f(a)<f(0)$ for all $a>0$. To accomplish this
we compute $f'(a)=-t_1^{-2}t_1'(a)-t_2^{-2}t_2'(a)$. From the definition $t_1=\coth(t_1+a)$
we see that $t_1'(a)=(1-\coth^2(t_1+a))(t_1'(a)+1)$. It follows that $t_1'(a)=-(1-t_1^{-2})$
and similarly $t_2'(a)=1-t_2^{-2}$. Therefore we have 
$f'(a)=t_1^{-2}(1-t_1^{-2})-t_2^{-2}(1-t_2^{-2})$. If we let $Q(x)=x(1-x)$ then this is the same
as $f'(a)=Q(t_1^{-2})-Q(t_2^{-2})$. Notice that both $t_1$ and $t_2$ are greater than $1$,
so we are interested in values of $Q(x)$ with $0<x<1$. Now the value of $t_1(0)=t_2(0)$
is approximately $1.2$, so both $t_1^{-2}$ are $t_2^{-2}$ are greater than $1/2$ at $a=0$.
Since $t_1$ is decreasing in $a$ while $t_2$ is increasing it follows from the shape of the 
quadratic function $Q$ that $f'(a)<0$ at least until the value $a_0$ for which 
$t_2^{-2}<1-t_1^{-2}(0)$. Thus it follows that $f(a)<f(0)$ for $0<a<a_0$. Now for
$a\geq a_0$ we have $f(a)=t_1^{-1}+t_2^{-1}<1+\sqrt{1-t_1^{-2}(0)}$, and the result
follows from the inequality $1+\sqrt{1-t_1^{-2}(0)}\leq 2t_1^{-1}(0)$. This last inequality
is equivalent to $t_1(0)+\sqrt{t_1^2(0)-1}<2$, a numerical inequality for $t_1(0)$
which is easily verified.
\end{proof}

The extremal annulus which arises in the above theorem is a minimal surface which is
of geometric interest. It is the unique portion of the catenoid cut out by a ball centered
at the origin which meets the boundary orthogonally; that is, it is a free boundary solution
in the ball. We refer to this portion of the catenoid as the ``critical" catenoid.

\section{Supercritical Annuli}

In this section we consider the annulus with an arbitrary metric. Any such annulus $\sig$ is conformally equivalent to $[0,T] \times S^1$ with the metric $dt^2+d\theta^2$, for some unique $T>0$ called the {\it conformal modulus}. Let $\alpha$ be the ratio of boundary lengths (our statements and conclusions are
symmetric when $\alpha$ is replaced by $\alpha^{-1}$).
As in the previous section, we denote by $T(1)$ the conformal modulus of the critical catenoid.
In this section we consider the {\em supercritical case}, when $T \geq 1/4(\alpha^{1/2}+\alpha^{-1/2})^2T(1)$, and we show that the critical catenoid maximizes $\sigma_1 L$ over all supercritical metrics on the annulus. We let $(\sigma_1L)^*=8\pi/T(1)\approx 4\pi/1.2$ denote the value
of $\sigma_1 L$ for the critical catenoid.

\begin{theorem} \label{theorem:supercritical}
For any supercritical metric on an annulus we have
\[
       \sigma_1(\sig) L(\partial\Sigma) \leq (\sigma_1L)^*. 
\]
Moreover, equality is achieved if and only if 
$\sig$ is conformally equivalent to the critical catenoid by a conformal transformation which is an isometry on the boundary; in particular, $\sig$ has boundary components of equal length 
$(\alpha=1)$.
\end{theorem}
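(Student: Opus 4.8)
The plan is to derive the inequality from a single explicit test function, and then to handle the equality case by combining the resulting rigidity with a second family of test functions and the sharp estimate for rotationally symmetric metrics in Theorem~\ref{theorem:rotational}.

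First I would fix a conformal identification $\sig\cong[0,T]\times S^1$ with the flat product metric $dt^2+d\theta^2$, where $T$ is the conformal modulus, and write $L_0,L_T$ for the lengths of the boundary circles $\Gamma_0=\{t=0\}$ and $\Gamma_T=\{t=T\}$, with $L=L_0+L_T$ and $\alpha=L_0/L_T$ (the conclusions being symmetric in $\alpha\leftrightarrow\alpha^{-1}$). Since the Dirichlet integral is conformally invariant in dimension two, the Rayleigh quotient for $\sigma_1$ depends on $g$ only through $L_0,L_T$. As a test function take the harmonic function $u_0=t-\bar t$ with $\bar t=TL_T/L$, chosen so that $\int_{\p\sig}u_0\,ds=0$. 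A short computation gives $\int_\sig|\n u_0|^2\,da=2\pi T$ and $\int_{\p\sig}u_0^2\,ds=T^2L_0L_T/L$, so the variational characterization of $\sigma_1$ yields
\[
\sigma_1\,L(\p\sig)\ \le\ \frac{2\pi L^2}{T\,L_0L_T}\ =\ \frac{2\pi}{T}\bigl(\alpha^{1/2}+\alpha^{-1/2}\bigr)^2 .
\]
The supercriticality hypothesis $T\ge\frac14(\alpha^{1/2}+\alpha^{-1/2})^2T(1)$ is exactly what makes the right-hand side at most $8\pi/T(1)=(\sigma_1L)^*$, which proves the inequality.

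For the rigidity, suppose $\sigma_1L(\p\sig)=(\sigma_1L)^*$. Then both inequalities above are equalities: the second forces $T=\frac14(\alpha^{1/2}+\alpha^{-1/2})^2T(1)$, and the first forces $u_0$ to realize the infimum in the variational characterization, hence to be a first Steklov eigenfunction, so $\p u_0/\p\nu=\sigma_1 u_0$ on $\p\sig$. Writing the metric near the boundary as $e^{2\varphi}(dt^2+d\theta^2)$, this boundary identity forces $e^{-\varphi}$ to be constant along each of $\Gamma_0,\Gamma_T$ (so both boundary circles are round) and gives $\sigma_1=2\pi L/(T L_0L_T)$; the latter is precisely the eigenvalue $\lambda_0^{(2)}$ of the rotationally symmetric metric on $[0,T]\times S^1$ with boundary lengths $L_0,L_T$ as computed in Section~\ref{section:rotsym}. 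To force $\alpha=1$ I would bring in the balanced test functions $\phi(t)\cos\theta$ and $\phi(t)\sin\theta$ used together (their $\theta$-cross terms cancel): for every $\phi$ one gets $\sigma_1\le 2\pi\int_0^T(\phi'^2+\phi^2)\,dt/(\phi(0)^2L_0+\phi(T)^2L_T)$, and minimizing over $\phi$ shows $\sigma_1\le\lambda_1^{(1)}$ for the same comparison metric. Combined with $\sigma_1=\lambda_0^{(2)}$ this gives $\lambda_0^{(2)}\le\lambda_1^{(1)}$, which by Section~\ref{section:rotsym} means $T\ge T(\alpha)$, where $T(\alpha)=t_1+t_2$ in the notation of Theorem~\ref{theorem:rotational}. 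So the equality constraint reads $\frac14(\alpha^{1/2}+\alpha^{-1/2})^2T(1)\ge T(\alpha)$; substituting $\alpha=t_1/t_2$ and $T(1)=2t_1(0)$ (with $t_1(0)=\coth t_1(0)$) rewrites this as $f(a)\ge f(0)$ for $f(a)=t_1^{-1}+t_2^{-1}$, which contradicts the strict inequality $f(a)<f(0)$ for $a\neq 0$ established in Theorem~\ref{theorem:rotational} unless $a=0$. Hence $\alpha=1$ and $T=T(1)$, so $\sig$ is conformally $[0,T(1)]\times S^1$, i.e.\ conformally equivalent to the critical catenoid; since the boundary components have equal length and (by the rigidity above) constant density, a rotation makes this conformal equivalence an isometry on $\p\sig$.

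The main obstacle I expect is the bookkeeping in the equality case: matching the two test-function bounds with the rotationally symmetric eigenvalues $\lambda_0^{(2)}$ and $\lambda_1^{(1)}$ of Section~\ref{section:rotsym}, extracting $T\ge T(\alpha)$ from $\lambda_0^{(2)}\le\lambda_1^{(1)}$, and recognizing that together with $T=\frac14(\alpha^{1/2}+\alpha^{-1/2})^2T(1)$ this is exactly the sharp case of the rotationally symmetric inequality — while the inequality itself is essentially immediate from the single test function $u_0$ once one notices that the definition of ``supercritical'' is tailored to it.
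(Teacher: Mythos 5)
Your proof is correct, and the inequality half is exactly the paper's argument: your $u_0=t-\bar{t}$ is the paper's linear eigenfunction $\ell$ pulled back under the conformal identification, and the chain $\sigma_1 L(\p\sig)\le 2\pi T^{-1}(\alpha^{1/2}+\alpha^{-1/2})^2\le 8\pi/T(1)$ is the same computation. The equality analysis diverges after the common first step (equality forces $u_0$ to be a first eigenfunction, hence the conformal factor is constant on each boundary circle). The paper then notes that the conformal identification is therefore an isometry on the boundary, so the entire Steklov spectrum of $\sig$ coincides with that of the flat rotationally symmetric comparison annulus $\tilde{\sig}$; since $\sigma_1(\tilde{\sig})L(\p\tilde{\sig})=(\sigma_1L)^*$, the uniqueness of the maximizer in Theorem \ref{theorem:rotational} immediately yields $T=T(1)$ and $\alpha=1$. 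You instead bring in the second family of balanced test functions $\phi(t)\cos\theta$, $\phi(t)\sin\theta$ to get $\sigma_1\le\lambda_1^{(1)}$ for the comparison metric, deduce $T\ge T(\alpha)$ from $\lambda_0^{(2)}\le\lambda_1^{(1)}$ and the monotonicity in Section \ref{section:rotsym}, and combine this with the forced value $T=\tfrac14(\alpha^{1/2}+\alpha^{-1/2})^2T(1)$ to obtain $t_1^{-1}+t_2^{-1}\ge 2t_1(0)^{-1}$, contradicting the strict inequality established in the proof of Theorem \ref{theorem:rotational} unless $a=0$; that algebraic reduction checks out, since $(\alpha^{1/2}+\alpha^{-1/2})^2=(t_1+t_2)^2/(t_1t_2)$ and $T(\alpha)=t_1+t_2$. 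Your route only ever uses the variational characterization of $\sigma_1$ and so avoids invoking the conformal invariance of the whole Steklov spectrum, at the cost of unpacking the $\lambda_0^{(2)}$ versus $\lambda_1^{(1)}$ comparison explicitly; the paper's route is shorter once that invariance is granted. Both arguments ultimately rest on the same rigidity content of Theorem \ref{theorem:rotational}.
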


\begin{proof}
Let $\sig$ be a supercritical annulus. 
Then there is a conformal diffeomorphism 
\[
        F: \sig \rightarrow [0,T] \times S^1, 
\]
where $T \geq 1/4(\alpha^{1/2}+\alpha^{-1/2})^2T(1)$. 
Denote by $\Gamma_0$ and $\Gamma_1$ the boundary components of $[0,T]\times S^1$
and let $L_0$ and $L_1$ denote the boundary lengths relative to the metric of $\sig$. Construct  
$\tilde{\sig}=[0,T] \times S^1$ with the flat conformal metric with the same boundary lengths $L_0$ and $L_1$.

Now choose a nonconstant eigenfunction $\ell(t)$ which is linear in $t$, and is normalized to have boundary $L^2$ norm one, $||\ell||_{L^2(\p \tilde{\sig})}=1$. We may pull back $\ell$ to $\sig$. 
Then $||\ell \circ F||_{L^2(\p \sig)}=1$, and $\ell \circ F$ is constant on $\Gamma_0$ and 
$\Gamma_1$ since $\ell$ is a linear function of $t$. We then have
\begin{align*}
      \int_{\p \sig} (\ell \circ F) \; ds 
      &= \ell(0) L(\Gamma_0) + \ell(T) L(\Gamma_1) \\
      &= \ell(0) L(\tilde{\Gamma}_0) + \ell(T) L(\tilde{\Gamma}_1) \\
      &= \int_{\p \tilde{\sig}} \ell \; d\tilde{s}  \\
      &= 0,
\end{align*}
where the last equality follows since $\ell$ is an eigenfunction on $\tilde{\sig}$.
Therefore, we can use $\ell \circ F$ as a test function in the variational characterization of $\sigma_1$, and we obtain:
\[
      \sigma_1(\sig) \leq E(\ell \circ F) = E(\ell) = \lambda_0^{(2)},
\]
$\lambda_0^{(2)}$ is defined in the previous section. From the calculation of the previous section we have $\lambda^{(2)}_0=(f(0)f(T)T)^{-1}(f(0)+f(T))$ where $2\pi f(0)=L_0$ and $2\pi f(T)=L_1$. It follows that 
\[
\lambda^{(2)}_0 L(\p\tilde{\sig})=2\pi(L_0L_1T)^{-1}(L_0+L_1)^2=2\pi T^{-1}(\alpha^{1/2}+\alpha^{-1/2})^2\leq 8\pi/T(1)
\]
from the supercritical condition. Thus,
\begin{equation}   \label{equation:supercritical}
     \sigma_1(\sig) L(\p\sig) \leq \lambda_0^{(2)} L(\p\tilde{\sig}) 
     \leq 8\pi/T(1)=(\sigma_1L)^*,
\end{equation}
which is the required inequality.

Now consider the case of equality. We must have from above that $\sigma_1(\sig)= E(\ell \circ F)$
and therefore $\ell\circ F$ is a first Steklov eigenfunction on $\sig$. If we write the metric of
$\sig$ as $\lambda^2(dt^2+d\theta^2)$, this implies that $\p \ell / \p \nu=\sigma_1 \ell$ on the
boundary of $[0,T]\times S^1$ where $\nu =-\lambda^{-1} \p / \p t$ on $\Gamma_0$
and $\nu =\lambda^{-1} \p / \p t$ on $\Gamma_1$. Since $\ell$ and $\p\ell/\p t$ are constant on each boundary component, it follows that $\lambda$ is constant on each boundary component. Therefore by the normalization of the lengths of the boundary components of $\tilde{\sig}$ we see than $F$ is an isometry from $\p\sig$ to $\p\tilde{\sig}$. 

It remains to show that $\tilde{\sig}$ is a rotationally symmetric metric on the critical catenoid with equal boundary lengths. Since $\sigma_1(\sig)L(\p\sig)=(\sigma_1L)^*$, it follows from (\ref{equation:supercritical})  that $\lambda_0^{(2)} L(\p\tilde{\sig})= (\sigma_1L)^*$. We have shown that $F$ is an isometry on the boundary, so it follows that the Steklov eigenvalues of $\sig$ are the same as those of $\tilde{\sig}$. We have also shown that $\lambda_0^{(2)}$ is the first eigenvalue for $\sig$, and therefore it is also the first eigenvalue of $\tilde{\sig}$. Thus we have 
$\sigma_1(\tilde{\sig})L(\p\tilde{\sig})= (\sigma_1L)^*$. It follows from Theorem \ref{theorem:rotational} that $\tilde{\sig}$ is equivalent to the critical catenoid
in the sense that $T=T(1)$ and $\alpha=1$. This completes the proof. 
\end{proof}

\section{Boundary conformal volume and relative conformal volume} \label{section:cv}

Let $(\sig^k,g)$ be a $k$-dimensional compact Riemaniann manifold with boundary $\p \sig \neq \emptyset$, and let $B^n$ be the unit ball in $\mathbb{R}^n$. 
Assume that $\sig$ admits a conformal map $\vp: \sig \rightarrow B^n$ with $\vp(\p \sig) \subset \p B^n$.
Let $G$ be the group of conformal diffeomorphisms of ${B^n}$. We define the boundary conformal volume to be the Li-Yau \cite{LY} conformal volume of the boundary submanifold $\p \sig$.

\begin{definition}
Given a map $\vp \in C^1(\p \sig, \p B^n)$ that admits a conformal extension $\vp: \sig \rightarrow B^n$, define the {\em boundary $n$-conformal volume of $\vp$} by
\[
      V_{bc}(\sig,n,\vp)=\sup_{f \in G} V ( (f (\vp (\p \sig))).
\]
The {\em boundary $n$-conformal volume of $\sig$} is then defined to be
\[
      V_{bc} (\sig,n)=\inf_{\vp} V_{bc} (\sig,n,\vp)
\]
where the infimum is over all $\vp \in C^1(\p \sig,\p B^n)$ that admit conformal extensions 
$\vp: \sig \rightarrow B^n$.
It can be shown (see Lemma \ref{lemma:dec}) that $V_{bc}(\sig,n) \geq V_{bc} (\sig,n+1)$. The 
{\em boundary conformal volume of $\sig$} is defined to be
\[
      V_{bc} (\sig)=\lim_{n \rightarrow \infty} V_{bc}(\sig, n).
\]
\end{definition}

\begin{remark} \label{rem:bc}
For any $k$-dimensional manifold $\sig$ with boundary, the boundary $n$-conformal volume of $\sig$ is bounded below by the volume of the $(k-1)$-dimensional sphere:
\[
                  V_{bc}(\sig,n) \geq V (S^{k-1}).
\]
The proof is as in \cite{LY}; given a point $\theta$ on $S^{n-1}$, let $f_\theta(t)$ be the one parameter subgroup of the group of conformal diffeomorphisms of the sphere generated by the gradient of the linear functions of $\mathbb{R}^n$ in the direction $\theta$. For all $t$, $f_\theta(t)$ fixes the points $\theta$ and $-\theta$, and $\lim_{t \rightarrow \infty} f_\theta(t)(x)=\theta$ for all $x \in S^{n-1} \setminus \{-\theta\}$. If $\vp: \p \sig \rightarrow S^{n-1}$ is a map whose differential has rank $k-1$ at $x$, then
\[
      \lim_{t \rightarrow \infty} V( f_{-\vp(x)}(t) (\vp(\p \sig))) = m V(S^{k-1} )
\]
for some $m \in \mathbb{Z}^+$ (here the integer $m$ is the multiplicity of the immersed
submanifold $\p\sig$ at the point $-\theta$).
\end{remark}

For $k=2$ and for a minimal surface $\sig$ that is a solution to the free boundary problem in the unit ball $B^n$ in $\mathbb{R}^n$, the boundary $n$-conformal volume of $\sig$ is the length of the boundary of $\sig$; that is, its boundary length is maximal in its conformal orbit.
\begin{theorem} \label{theorem:max}
Let $\sig$ be a minimal surface in $B^n$, with nonempty boundary $\p \sig \subset \p B^n$, and meeting $\p B^n$ orthogonally along $\p \sig$, given by the isometric immersion $\vp: \sig \rightarrow B^n$. Then
\[
       V_{bc}(\sig,n,\vp)= L(\p \sig),
\]
the length of the boundary of $\sig$.
\end{theorem}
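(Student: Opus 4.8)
The plan is to exploit the conformal invariance of the energy of a harmonic map and the specific structure of a free boundary minimal surface, where the coordinate functions $x^i$ are themselves first Steklov eigenfunctions with eigenvalue one (Lemma \ref{lemma:minimal}). The inequality $V_{bc}(\sig,n,\vp)\geq L(\p\sig)$ is free: since the identity is an element of $G$, the supremum over $f\in G$ of $L(f(\vp(\p\sig)))$ is at least $L(\vp(\p\sig))=L(\p\sig)$ (using that $\vp$ is isometric). So the content is the reverse inequality: for every conformal diffeomorphism $f$ of $B^n$, the image $f\circ\vp$ restricted to $\p\sig$ has length at most $L(\p\sig)$.

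First I would fix $f\in G$ and consider the map $u=f\circ\vp:\sig\to B^n$. Write $u=(u^1,\dots,u^n)$ for its coordinate functions. The key estimate is
\[
      L\big(f(\vp(\p\sig))\big) = \int_{\p\sig} |u_\tau|\,ds \leq \int_{\p\sig}\sum_{i=1}^n (u^i)^2\,ds,
\]
where the first equality uses that $\vp$ maps into $\p B^n$ so $f\circ\vp$ also takes values on $\p B^n$ and the curve is parametrized by arclength of $g$ up to a conformal factor — actually more carefully, since $f\circ\vp(\p\sig)\subset\p B^n=S^{n-1}$ and $\sum (u^i)^2=1$ there, the inequality $|u_\tau|\leq 1 = \sum (u^i)^2$ on $\p\sig$ only gives $L(f(\vp(\p\sig)))\leq L(\p\sig)$ directly if $|u_\tau|\leq 1$ pointwise. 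That is not automatic, so instead I would run the argument through energy. The right approach: since $f$ is conformal on $B^n$ and $\vp$ is conformal (being an isometric immersion of a surface, hence conformal to itself), $u=f\circ\vp$ is a conformal (hence harmonic-energy-minimizing in its conformal structure) map, and
\[
     2\,\mathrm{Area}(u(\sig)) = \int_\sig |\n u|^2\,da = \sum_{i=1}^n \int_\sig |\n u^i|^2\,da.
\]
Each $u^i$ is some function on $\sig$; apply the variational characterization of $\sigma_1=1$ (which holds for $\sig$ with its given metric because $\vp$ is a free boundary solution, by Lemma \ref{lemma:minimal}) — but $u^i$ need not have mean zero on $\p\sig$. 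This is the main obstacle.

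I expect the resolution to be the following. The functions $1,x^1,\dots,x^n$ restricted to $\p\sig$ span an $(n+1)$-dimensional space on which one can balance: I would argue that because the original coordinate functions $x^i$ satisfy $\int_{\p\sig} x^i\, x^j\,ds$ forming a definite quadratic form, one can precompose $f$ with a conformal automorphism so that $\int_{\p\sig} u^i\,ds=0$ for all $i$; this is exactly the Hersch-type balancing (Lemma \ref{lemma:hersch}) applied to the measure $ds$ on $\p\sig$ pushed to $S^{n-1}$, and it does not change $V_{bc}(\sig,n,\vp)$ since we take a supremum over all of $G$ anyway. With $\int_{\p\sig}u^i\,ds=0$ we may use each $u^i$ as a test function, getting $\sigma_1\int_{\p\sig}(u^i)^2\,ds\leq \int_\sig|\n u^i|^2\,da$; summing over $i$, using $\sigma_1=1$, $\sum(u^i)^2=1$ on $\p\sig$, and the conformality identity above,
\[
     L(\p\sig) = \int_{\p\sig}\sum_i (u^i)^2\,ds \leq \sum_i\int_\sig |\n u^i|^2\,da = 2\,\mathrm{Area}(u(\sig)).
\]
Hmm — this bounds $L(\p\sig)$ by an area, not the other way. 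So in fact the correct chain must be reversed once more: I would instead take the \emph{original} immersion's eigenvalue characterization to bound the energy of $u$ from below by $L(f(\vp(\p\sig)))$ and from above by $2\,\mathrm{Area}(u(\sig))=2\,\mathrm{Area}(\vp(\sig))$ by conformal invariance of area in dimension two, and separately note $2\,\mathrm{Area}(\vp(\sig))=\int_\sig|\n\vp|^2=\sum\int_\sig|\n x^i|^2 = \sum\int_{\p\sig}(x^i)^2\,ds = L(\p\sig)$ using that the $x^i$ are eigenfunctions with eigenvalue one and mean zero (which holds since $\sig$ is minimal with free boundary, so $\int_{\p\sig}x^i\,ds = \int_\sig\Delta x^i\,da/\sigma\cdot(\dots)$ — more simply, $\int_{\p\sig}\partial x^i/\partial\nu\,ds=\int_\sig\Delta x^i\,da=0$ and $\partial x^i/\partial\nu = x^i$). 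Combining, $L(f(\vp(\p\sig)))\leq L(\p\sig)$ for every $f$, and with the trivial reverse bound this gives equality, i.e. $V_{bc}(\sig,n,\vp)=L(\p\sig)$. The delicate points I would need to get right are (i) the balancing step and its compatibility with taking the supremum over $G$, and (ii) the inequality $L(f(\vp(\p\sig)))\leq \frac12\int_\sig|\n(f\circ\vp)|^2\,da$, which should follow from the variational characterization of $\sigma_1\leq 1$ for the (balanced) test functions $u^i$ together with $\sum(u^i)^2\equiv 1$ on the boundary.
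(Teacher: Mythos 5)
There is a genuine gap: your chain of inequalities never actually bounds $L(f(\vp(\p\sig)))$ from above. The variational characterization of $\sigma_1=1$ applied to balanced test functions $u^i=(f\circ\vp)^i$ gives
\[
L(\p\sig)=\int_{\p\sig}\sum_i(u^i)^2\,ds\;\leq\;\sum_i\int_\sig|\n u^i|^2\,da \;=\; 2\,A\big(f(\vp(\sig))\big),
\]
because $\sum_i(u^i)^2\equiv1$ on $\p\sig$ and $ds$ is the \emph{intrinsic} arclength; the quantity you need to control, $L(f(\vp(\p\sig)))=\int_{\p\sig}|u_\tau|\,ds$, never appears on the small side of any inequality extractable this way. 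Your attempted repair rests on the assertion that $A(f(\vp(\sig)))=A(\vp(\sig))$ ``by conformal invariance of area in dimension two,'' and this is false: what is invariant is the Dirichlet energy under conformal changes of the \emph{domain} metric, not the area of the image under a conformal transformation of the ambient ball. (If areas of conformal images were invariant, the notion of relative conformal volume in Section \ref{section:cv} would be vacuous.) So the proposal establishes only $L(\p\sig)\leq 2A(f(\vp(\sig)))$ for every $f$ --- true, but it is the reverse of what is needed, and the inequality $L(f(\vp(\p\sig)))\leq L(\p\sig)$ is never reached.

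The paper's proof is of an entirely different nature: it uses the conformal invariance of the trace-free second fundamental form integral, the Gauss equation to rewrite it as $\int_\sig(H^2-4K)\,da$, and Gauss--Bonnet applied to both $\sig$ and $f(\sig)$. The decisive geometric input is that the geodesic curvature of the boundary is identically $1$, both for $\sig$ and for every conformal image $f(\sig)$, because the condition that the surface meets $\p B^n$ orthogonally (so that the conormal is $\pm x$) is preserved by conformal transformations of the ball. This turns the boundary terms of Gauss--Bonnet into boundary lengths and yields $4L(\p\sig)=\int_{f(\sig)}\tilde{H}^2\,d\tilde{a}+4L(\p f(\sig))\geq 4L(\p f(\sig))$, with the Willmore-type energy of the (no longer minimal) image accounting exactly for the deficit. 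If you want to salvage an eigenvalue-based approach, you would need an upper bound on $\int_{\p\sig}|u_\tau|\,ds$ rather than on $\int_{\p\sig}|u|^2\,ds$, and the Steklov variational principle gives no handle on the former.
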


\begin{proof}
The trace-free second fundamental form $|| A-\frac{1}{2}(\mbox{Tr}_gA)g ||^2 dV_g$ is conformally invariant for surfaces. Using the Gauss equation we have
$2|| A-\frac{1}{2}(\mbox{Tr}_gA)g ||^2 = H^2 -4K$. Therefore, given any $f \in G$, 
\[    
      \int_{\sig} (H^2-4K) \;da
      =\int_{f(\sig)} (\tilde{H}^2-4\tilde{K}) \;d\tilde{a},
\]
where $d\tilde{a}$ denotes the induced area element on $f(\sig)$, and 
$\tilde{K}$ and $\tilde{H}$ denote the Gauss and mean curvatures of $f(\sig)$ in $\mathbb{R}^n$. Since $\sig$ is minimal, $H=0$, and so we have
\begin{equation} \label{equation:gb}
     -4 \int_{\sig} K \;da
     =\int_{f(\sig)} \tilde{H}^2 \;d\tilde{a} - 4 \int_{f(\sig)} \tilde{K} \;d\tilde{a}.
\end{equation}
By the Gauss-Bonnet Theorem, 
\begin{align*}
     \int_{\sig} K \;da & = 2\pi \chi(\sig) - \int_{\p \sig} \kappa \; ds \\
     \int_{f(\sig)} \tilde{K} \;da & = 2\pi \chi(f(\sig)) - \int_{\p f(\sig)} \tilde{\kappa} \; ds,
\end{align*}
and using this in (\ref{equation:gb}), since $\chi(\sig)=\chi(f(\sig))$, we obtain
\begin{align} \label{eq:kappa}
      4\int_{\p \sig} \kappa \;ds &=\; \int_{f(\sig)} \tilde{H}^2 \; d\tilde{a} 
                                                      + 4 \int_{\p f(\sig)} \tilde{\kappa} \;d\tilde{s} \notag \\
                                                & \geq \; 4\int_{\p f(\sig)} \tilde{\kappa} \; d\tilde{s}.
\end{align}
If $T$ is the oriented unit tangent vector of $\p \sig$, and $\nu$ is the inward unit conormal vector along $\p \sig$, then
\[
   \kappa=\la \frac{dT}{ds},\nu \ra = - \la T, \frac{d\nu}{ds} \ra
   =\la T, \frac{d\vp}{ds} \ra = \la T,T \ra =1,
\]
where in the third to last equality we have used the fact that $\nu=-\vp$ since $\sig$ meets $\p B^n$ orthogonally along $\p \sig$. Since $f$ is conformal, $f(\sig)$ also meets $\p B^n$ orthogonally along $\p f(\sig)$, and so we also have that $\tilde{\kappa}=1$. Using this in (\ref{eq:kappa}) we obtain
\[ 
      L(\p \sig) \geq L(\p f(\sig)).
\] 
This shows that
\[
       L(\p \sig) \geq V_{bc}(\sig,n,\vp)
\]
as claimed.
\end{proof}
     
The proof of Theorem \ref{theorem:max} implies that any minimal surface that is a solution to the free boundary problem in the unit ball in $\mathbb{R}^n$ has area greater than or equal to that of a flat equatorial disk solution.

\begin{theorem} \label{theorem:lowerbound}
Let $\sig$ be a minimal surface in $B^n$, with (nonempty) boundary $\p \sig \subset \p B^n$, and meeting $\p B^n$ orthogonally along $\p \sig$. Then
\[
          2A(\sig)=L(\p\sig) \geq 2\pi.
\]
\end{theorem}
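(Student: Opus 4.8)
The plan is to combine the identity $2A(\sig) = L(\p\sig)$ with the lower bound $L(\p\sig) \geq 2\pi$, and to obtain each of these in turn. For the first identity I would use that $\sig$ is minimal and meets $\p B^n$ orthogonally, so that the coordinate functions $x^i$ are harmonic with $\p x^i/\p\nu = x^i$ on $\p\sig$ (Lemma \ref{lemma:minimal}). Summing the divergence identity $\int_\sig \operatorname{div}(x^i \n x^i)\,da = \int_\sig |\n x^i|^2\,da$ over $i$ and using $\sum_i |\n x^i|^2 = 2$ (since the immersion is conformal in each tangent plane, or directly since the $x^i$ restrict to a map whose differential is an isometry) gives $\int_{\p\sig}\sum_i x^i (x^i) \,ds = 2A(\sig)$; since $|x|=1$ on $\p\sig$ this is exactly $L(\p\sig)$. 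So $2A(\sig) = L(\p\sig)$.

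For the inequality $L(\p\sig) \geq 2\pi$, the idea is to invoke the boundary conformal volume. By Theorem \ref{theorem:max} we have $V_{bc}(\sig,n,\vp) = L(\p\sig)$, where $\vp$ is the given isometric immersion. On the other hand Remark \ref{rem:bc} gives the universal lower bound $V_{bc}(\sig,n,\vp) \geq V_{bc}(\sig,n) \geq V(S^{k-1})$; here $k=2$, so $V(S^{k-1}) = V(S^1) = 2\pi$. Combining these two facts immediately yields $L(\p\sig) \geq 2\pi$, hence $2A(\sig) = L(\p\sig) \geq 2\pi$.

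The only subtlety — and the step I expect to require the most care — is making sure the chain of inequalities $V_{bc}(\sig,n,\vp) \geq V_{bc}(\sig,n) \geq V(S^1)$ is applied correctly: $V_{bc}(\sig,n)$ is defined as an infimum over admissible boundary maps, so $V_{bc}(\sig,n,\vp) \geq V_{bc}(\sig,n)$ is automatic, and the bound $V_{bc}(\sig,n) \geq V(S^{k-1})$ is the content of Remark \ref{rem:bc}, whose proof (as in Li--Yau) degenerates the conformal group toward a point and counts multiplicity $m \geq 1$. One should check that $\vp|_{\p\sig}$ has differential of rank $k-1 = 1$ somewhere, which holds because $\vp$ is an immersion; then the degeneration argument applies and gives $\lim_{t\to\infty} V(f_{-\vp(x)}(t)(\vp(\p\sig))) = m\, V(S^1) \geq 2\pi$, so the supremum defining $V_{bc}(\sig,n,\vp)$ is at least $2\pi$. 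Finally I would remark, as the statement does, that $L(\p\sig) \geq 2\pi$ together with $2A(\sig) = L(\p\sig)$ is equivalent to the sharp isoperimetric inequality $L(\p\sig)^2 \geq 4\pi A(\sig)$ for free boundary minimal surfaces (with equality for the equatorial disk), since substituting $A = L/2$ turns $L^2 \geq 4\pi A$ into $L \geq 2\pi$.
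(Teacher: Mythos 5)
Your proposal is correct and follows essentially the same route as the paper: the identity $2A(\sig)=L(\p\sig)$ via the divergence theorem applied to $|x|^2$ (equivalently to $\sum_i x^i\n x^i$) together with $\p x^i/\p\nu=x^i$, and the lower bound $L(\p\sig)\geq 2\pi$ by combining the conformal monotonicity of boundary length from Theorem \ref{theorem:max} with the Li--Yau degeneration argument of Remark \ref{rem:bc}. Packaging the second step through the chain $L(\p\sig)=V_{bc}(\sig,n,\vp)\geq V_{bc}(\sig,n)\geq V(S^1)$ is only a cosmetic difference from the paper's direct statement $2A(\sig)\geq L(\p f(\sig))\to 2\pi m$.
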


\begin{proof}
Given $f \in G$, as in the proof of Theorem \ref{theorem:max}, we have 
\begin{equation} \label{eq:length}
   L( \p \sig) \geq L(\p f(\sig)).
\end{equation}
Since $\sig$ is minimal, the coordinate functions are harmonic $\Delta_\sig x^i=0$, and 
$\Delta_\sig |x|^2=4$. Therefore,
\[
     4 A(\sig) = \int_\sig \Delta_\sig |x|^2 \;da 
                     = \int_{\p \sig} \fder{|x|^2}{\nu} \;ds
                     =\int_{\p \sig} 2 \;ds
                     =2L(\p \sig).
\]
Using this in (\ref{eq:length}) gives
\[
        2 A(\sig) \geq L(\p f(\sig)).
\]
If $p \in \p \sig$, then as in Remark \ref{rem:bc},
\[
       \lim_{t \rightarrow \infty} L(f_p(t)(\p \sig))=mL(S^1)=2\pi m
\]
for some $m \in \mathbb{Z}^+$, and so, we have the desired conclusion
\[
     2A(\sig)=L(\p\sig) \geq 2\pi.
\]
\end{proof}

\begin{corollary}
The sharp isoperimetric inequality holds for free boundary minimal surfaces in the ball:
\[
         A \leq \frac{L^2}{4 \pi}.
\]
\end{corollary}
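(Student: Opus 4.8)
The plan is to derive the isoperimetric inequality directly from Theorem \ref{theorem:lowerbound}, which already packages together everything we need. The key identity from that theorem is $2A(\sig) = L(\p\sig)$, valid for any free boundary minimal surface in the ball. This is an exact relation between area and boundary length, not merely an inequality, and it holds regardless of the topology of $\sig$. The second ingredient is the conclusion $L(\p\sig) \geq 2\pi$ of the same theorem.

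First I would substitute the identity $A = L/2$ into the quantity $L^2/(4\pi)$, obtaining $L^2/(4\pi) = (2A)^2/(4\pi) = A^2/\pi$. So the claimed inequality $A \leq L^2/(4\pi)$ is equivalent to $A \leq A^2/\pi$, i.e. to $\pi \leq A$ (after dividing by the positive quantity $A$), which is in turn equivalent to $L = 2A \geq 2\pi$. But that is precisely the content of Theorem \ref{theorem:lowerbound}. Hence the corollary follows immediately, with no further computation.

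I would also remark on sharpness, to justify the word ``sharp'' in the statement: the flat equatorial disk $D \subset B^n$ is a free boundary minimal surface with $A = \pi$ and $L = 2\pi$, and for it $L^2/(4\pi) = 4\pi^2/(4\pi) = \pi = A$, so equality holds. Thus the constant $4\pi$ cannot be improved. One could go further and observe that equality in $L \geq 2\pi$ forces, via the proof of Theorem \ref{theorem:lowerbound}, that all conformal images have the same boundary length and that the limiting multiplicity $m$ equals $1$, which should pin down the equatorial disk as the unique equality case; but for the corollary as stated only the inequality is needed.

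There is essentially no obstacle here: the entire content has been front-loaded into Theorems \ref{theorem:max} and \ref{theorem:lowerbound}, and the corollary is a one-line algebraic rearrangement of the relation $2A = L \geq 2\pi$. The only point requiring the slightest care is noting that $A > 0$ so that division is legitimate, which is automatic since $\p\sig \neq \emptyset$ forces $\sig$ to be a nondegenerate surface. I would therefore present the proof in two or three sentences.
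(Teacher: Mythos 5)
Your argument is correct and is essentially identical to the paper's own proof: both substitute the identity $2A(\sig)=L(\p\sig)$ into the isoperimetric quantity and observe that $A\leq L^2/(4\pi)$ is then equivalent to $A\geq\pi$, i.e.\ to the conclusion of Theorem \ref{theorem:lowerbound}. Your additional remarks on sharpness via the equatorial disk are a harmless supplement not present in the paper.
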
   

\begin{proof}
For free boundary minimal surfaces in the ball we have $2A(\sig)=L(\p \sig)$, as shown in the proof of Theorem \ref{theorem:lowerbound}. It follows that the inequality $A(\sig) \geq \pi$ is equivalent to the sharp isoperimetric inequality
$A \leq {L^2}/{4 \pi}$.
\end{proof}

\vspace{2mm}
   
\begin{definition}
Let $\sig$ be a $k$-dimensional compact Riemannian manifold with boundary that admits a conformal map $\vp: \sig \rightarrow B^n$ with $\vp(\p \sig) \subset \p B^n$.
Define the {\em relative $n$-conformal volume of $\vp$} by
\[
      V_{rc}(\sig,n,\vp)=\sup_{f \in G} V ( (f (\vp (\sig))).
\]
The {\em relative $n$-conformal volume of $\sig$} is then defined to be
\[
      V_{rc} (\sig,n)=\inf_\vp V_{rc} (\sig,n,\vp)
\]
where the infimum is over all non-degenerate conformal maps $\vp: \sig \rightarrow B^n$ with 
$\vp(\p \sig) \subset \p B^n$.

\begin{lemma} \label{lemma:dec}
If $m \geq n$, then $V_{rc}(\sig,n) \geq V_{rc}(\sig,m)$.
\end{lemma}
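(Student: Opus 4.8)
The plan is to reduce to the case $m=n+1$ --- then $V_{rc}(\sig,n)\ge V_{rc}(\sig,n+1)\ge\cdots\ge V_{rc}(\sig,m)$ by iteration --- and to adapt the argument of Li and Yau \cite{LY} that their conformal volume is nonincreasing in the dimension of the target. The substance is this: passing from $B^n$ to $B^{n+1}$ enlarges the conformal group from $G_n$ to $G_{n+1}$, but every $f\in G_{n+1}$ can be ``straightened'' by a further conformal transformation, at the cost of a pointwise conformal factor $\ge 1$, so as to carry $B^n$ --- viewed as the equatorial disk of $B^{n+1}$ --- back onto itself; hence no volume is gained.

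To set this up, regard $B^n\subset B^{n+1}$ as the equatorial disk $B^{n+1}\cap\{x_{n+1}=0\}$, via the isometric inclusion $\mathbb{R}^n\subset\mathbb{R}^{n+1}$. If $\vp:\sig\to B^n$ is a non-degenerate conformal map with $\vp(\p\sig)\subset\p B^n$, composing with the inclusion gives a non-degenerate conformal map $\bar\vp:\sig\to B^{n+1}$ with $\bar\vp(\p\sig)\subset\p B^{n+1}$; thus $V_{rc}(\sig,n+1)\le V_{rc}(\sig,n+1,\bar\vp)$, and since $\vp$ is arbitrary it suffices to prove $V(f(\bar\vp(\sig)))\le V_{rc}(\sig,n,\vp)$ for every $f\in G_{n+1}$. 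Fix $f$ and put $F=f(B^n)$. As $B^n$ is a round $n$-disk meeting $\p B^{n+1}$ orthogonally --- equivalently a totally geodesic $\mathbb{H}^n$ in the Poincar\'e ball model --- so is $F$, and its ideal boundary $\p F$ is a round $(n-1)$-sphere in $S^n=\p B^{n+1}$ with a pair of antipodal poles. After applying a rotation $R\in SO(n+1)$ to $f$ (replacing $f$ by $R\circ f$, which alters neither $V(f(\bar\vp(\sig)))$ nor $V_{rc}(\sig,n,\vp)$) we may assume the poles are $\pm e_{n+1}$ and that $\p F=\{x_{n+1}=c\}\cap S^n$ with $c\ge 0$; the case $c=0$ is trivial, so take $c>0$. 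Let $\{D_s\}\subset G_{n+1}$ be the conformal dilations of $B^{n+1}$ fixing $\pm e_{n+1}$ (the flow of the gradient on $S^n$ of $x\mapsto x_{n+1}$). As $s$ runs over $\mathbb{R}$ the latitude sphere $D_s(\p F)$ sweeps monotonically through all latitude $(n-1)$-spheres, so there is a unique $s_0$ with $D_{s_0}(\p F)$ equal to the equator; then $D_{s_0}$ carries $F$ onto the unique totally geodesic disk with that ideal boundary, namely the equatorial $B^n$. Hence $g:=(D_{s_0}\circ f)|_{B^n}\in G_n$, and
\[
   V\bigl(g(\vp(\sig))\bigr)=V\bigl(D_{s_0}(f(\bar\vp(\sig)))\bigr)=\int_{f(\bar\vp(\sig))}\lambda_{D_{s_0}}^{k}\,dV ,
\]
where $\lambda_{D_{s_0}}$ is the Euclidean conformal factor of $D_{s_0}$ and $k=\dim\sig$.

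The one genuinely substantive step --- and the expected obstacle --- is to show $\lambda_{D_{s_0}}\ge 1$ on $F$; granting this, the displayed identity gives $V(f(\bar\vp(\sig)))\le V(g(\vp(\sig)))\le V_{rc}(\sig,n,\vp)$, which finishes the proof. Every element of $G_{n+1}$ is a hyperbolic isometry of the ball, so $\lambda_h(x)=(1-|h(x)|^2)/(1-|x|^2)$ and the claim is equivalent to $|D_{s_0}(x)|\le|x|$ for all $x\in F$. I would check this in the upper half-space model: conjugating $B^{n+1}$ to $\{Y_{n+1}>0\}$ by a Cayley transform sending $e_{n+1}\mapsto\infty$, $-e_{n+1}\mapsto 0$, and the center $0\mapsto(0,1)$, the disk $F$ becomes a hemisphere $\{|Y|=r,\ Y_{n+1}>0\}$ centered at the origin of $\mathbb{R}^n$, the equatorial $B^n$ becomes the unit hemisphere, and $D_{s_0}$ becomes the Euclidean dilation $Y\mapsto Y/r$. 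Moreover $r>1$: the equator's Cayley image is the unit sphere, and the images of the latitude spheres $\{x_{n+1}=c\}\cap S^n$ are origin-centered spheres whose radii increase monotonically in $c$, while $c>0$. A short computation with the explicit inverse Cayley transform then reduces $|D_{s_0}(x)|\le|x|$ on $F$ (where $|Y|=r$) to the elementary inequality $-2Y_{n+1}(r-1)^2\le 0$, which holds --- strictly off $\p F$ unless $r=1$. Everything else is bookkeeping with the conformal groups; reading $V(\cdot)$ throughout as the volume of the boundary submanifold $\p\sig$ gives verbatim the companion statement $V_{bc}(\sig,n)\ge V_{bc}(\sig,n+1)$.
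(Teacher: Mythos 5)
Your proof is correct and follows essentially the same route as the paper's: both arguments take the image $f(B^n)$ of the equatorial $n$-ball under a conformal transformation of the larger ball, rotate it into standard position, and carry it back onto $B^n$ by a conformal map whose factor is $\geq 1$, so that no volume is lost in returning to the smaller conformal group. The only differences are cosmetic and in your favor: you reduce to codimension one and iterate (the paper treats $m\geq n$ directly), and you identify the ``conformal projection'' concretely as a M\"obius dilation of $B^{n+1}$ and actually verify the volume-increasing property via the half-space computation, a step the paper dismisses with ``clearly $P$ is volume increasing.''
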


\begin{proof}
To see this, suppose $\vp: \sig \rightarrow B^n \subset B^m$ is conformal, with 
$\vp(\p \sig) \subset \p B^n \subset \p B^m$. Let $A=\vp(\sig) \subset B^n$ and suppose that $f$ is a conformal transformation of $B^m$. 
Then $f(A)$ lies in the spherical cap $f(B^n)$ in $B^m$ whose boundary lies in $\p B^m$. 
Let $T \in O(m)$ be an orthogonal transformation that rotates this spherical cap so that its boundary lies in an $n$-plane parallel to the $n$-plane containing the boundary of the original equatorial $B^n$. Let $P$ be the conformal projection of $T(f(B^n))$ onto $B^n$, and let $A'=P(T(f(A)))$. Clearly $P$ is volume increasing, and so
\[
      V(A') \geq V(f(A)).
\]
But $A'$ is the image of $A$ under some conformal transformation of $B^n$, therefore
\[
      \sup_{F \in G} V(F(A)) \geq \sup_{f \in G'} V(f(A)),
\]
where $G$ denotes the group of conformal transformations of $B^n$, and $G'$ denotes the group of conformal transformations of $B^m$.
\end{proof}

The {\em relative conformal volume of $\sig$} is defined to be
\[
      V_{rc} (\sig)=\lim_{n \rightarrow \infty} V_{rc}(\sig, n).
\]
\end{definition}           

\begin{remark}
For any $k$-dimensional manifold $\sig$ with boundary, the relative $n$-conformal volume of $\sig$ is bounded below by the volume of the $k$-dimensional ball:
\[
                  V_{rc}(\sig,n) \geq V (B^k).
\]
To see this, suppose $\vp: \sig \rightarrow B^n$ is a conformal map with 
$\vp(\p \sig) \subset \p B^n$, whose differential has rank $k$ at $x \in \p \sig$. The conformal diffeomorphisms  $f_{-\vp(x)}(t)$ of the sphere (see Remark \ref{rem:bc}), extend to conformal diffeomorphisms of $B^n$, and
\[
      \lim_{t \rightarrow \infty} V( f_{-\vp(x)}(t) (\vp(\sig))) = m V(B^k )
\]
for some $m \in \mathbb{Z}^+$, the multiplicity of $\vp(\p\sig)$ at $\vp(x)$.
\end{remark}

\section{Relationship between the first eigenvalue and conformal volume}

In this section we prove estimates for the first eigenvalue of the Dirichlet-to-Neumann map which are analogs of the estimates of Li and Yau \cite{LY} and El Soufi and Ilias \cite{EI} for the first Neumann eigenvalue of the Laplacian. 

\begin{lemma} \label{lemma:hersch}
Let $(M,g)$ be a compact Riemanian manifold, and let $\vp$ be an immersion of $M$ into $S^{n-1} \subset \mathbb{R}^{n}$. There exists $f \in G$ such that $\psi= f \circ \vp=(\psi^1, \ldots, \psi^{n})$ satisfies
\[
       \int_M \psi^i \; dv_g=0 
\]
for $i=1, \ldots, n$.
\end{lemma}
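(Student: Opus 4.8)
The plan is to follow the classical Hersch center-of-mass argument, using the conformal group $G$ of the ball (which restricts to the conformal group of the sphere on the boundary) to "balance" the immersion. Recall that $G$ is generated by rotations of $S^{n-1}$ together with, for each point $p \in S^{n-1}$, the one-parameter family $f_p(t)$ of conformal dilations fixing $p$ and $-p$ with $f_p(t)(x) \to p$ as $t \to \infty$ for all $x \neq -p$. The key observation is that the map that sends a balancing parameter to the resulting center of mass is continuous, and a degree/topological argument shows it must hit the origin.

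Concretely, I would parametrize a subset of $G$ by the open ball $B^n$ itself: to a point $\xi \in B^n$ (written $\xi = r p$ with $|p|=1$, $0 \le r < 1$) associate the conformal dilation $f_\xi \in G$ with $f_\xi(0)=\xi$ (equivalently the element pushing mass toward $p$, with $r$ playing the role of $\tanh$ of the parameter $t$ above). Then define
\[
      \Phi : B^n \to \mathbb{R}^n, \qquad
      \Phi(\xi) = \frac{1}{\mathrm{Vol}(M)} \int_M (f_\xi \circ \vp)(x) \, dv_g(x).
\]
Since $f_\xi \circ \vp$ maps into $S^{n-1} \subset B^n$ (closed ball) and $\mathrm{Vol}(M) < \infty$, the average $\Phi(\xi)$ lies in the closed unit ball; I would first check it lies in the open ball, i.e. $|\Phi(\xi)| < 1$, which holds because $f_\xi \circ \vp$ is not a.e.\ equal to a single constant unit vector (the immersion is nonconstant). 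The next step is to understand the boundary behavior: as $\xi \to p \in S^{n-1}$, we have $f_\xi \circ \vp \to p$ pointwise a.e.\ on $M$ (everywhere except on the $\vp$-preimage of $-p$, which—since $\vp$ is an immersion, hence has image of measure... well, since $\vp^{-1}(-p)$ has measure zero for generic $p$, or one argues directly) so by dominated convergence $\Phi(\xi) \to p$. Hence $\Phi$ extends continuously to $\overline{B^n}$ with $\Phi|_{S^{n-1}} = \mathrm{id}$.

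With this in hand the conclusion is a standard topological fact: a continuous map $\Phi : \overline{B^n} \to \overline{B^n}$ (in fact into $B^n$, but at least into $\overline{B^n}$) which restricts to the identity on $S^{n-1}$ must be surjective onto $B^n$; in particular there exists $\xi_0 \in B^n$ with $\Phi(\xi_0) = 0$. (If $0$ were not in the image, composing $\Phi$ with radial projection $\overline{B^n}\setminus\{0\} \to S^{n-1}$ would give a retraction of $\overline{B^n}$ onto $S^{n-1}$, contradicting no-retraction / Brouwer.) Setting $f = f_{\xi_0}$ and $\psi = f \circ \vp$ gives $\int_M \psi^i \, dv_g = \mathrm{Vol}(M)\,\Phi^i(\xi_0) = 0$ for all $i$, which is the claim.

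The main obstacle — really the only delicate point — is establishing the continuous extension of $\Phi$ to the boundary sphere and the identification $\Phi|_{S^{n-1}} = \mathrm{id}$. This requires care about the pointwise limit $f_\xi \circ \vp \to p$ failing on $\vp^{-1}(-p)$, and the use of dominated convergence (legitimate since everything is bounded by $1$ and $M$ has finite volume); one should note the exceptional set $\vp^{-1}(-p)$ has measure zero because $\vp$ is an immersion of a $\dim M$-manifold into a sphere of the same or higher dimension — if $\dim M = n-1$ this is a regular value argument, and if one only needs one good $p$ it suffices that such $p$ exists. Everything else — the finite-volume normalization, the openness $|\Phi| < 1$ in the interior, and the topological surjectivity — is routine. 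Note also that in the application (Theorem~\ref{theorem:yy}) the relevant immersion is into $S^1 \subset \mathbb{R}^2$ via the Ahlfors map restricted to $\p\sig$, so the $n=2$ case is what is actually used, but the argument above is dimension-independent.
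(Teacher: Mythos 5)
Your argument is correct and is precisely the classical Hersch center-of-mass balancing argument that the paper's proof consists of citing (Hersch \cite{H} and Li--Yau \cite{LY}, p.~274), including the standard no-retraction step and the measure-zero observation for $\vp^{-1}(-p)$ needed for the boundary continuity of $\Phi$. Nothing further is required.
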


\begin{proof} See \cite{H}, \cite{LY} page 274. 
\end{proof}

\begin{theorem} \label{thm:rcestimate}
Let $(\sig,g)$ be a compact $k$-dimensional Riemannian manifold with nonempty boundary. Let 
$\sigma_1 >0$ be the first non-zero eigenvalue of the Dirichlet-to-Neumann map on $(\sig,g)$. Then
\[
         \sigma_1 \, V(\p \sig) \, V(\sig)^{\frac{2-k}{k}} \leq k \, V_{rc}(\sig,n)^{\frac{2}{k}}
\]
for all $n$ for which $V_{rc}(\sig,n)$ is defined (i.e. such that there exists a conformal mapping 
$\vp: \sig \rightarrow B^n$ with $\vp(\p \sig) \subset \p B^n$). Equality implies that there exists a conformal harmonic map $\vp: \sig \rightarrow B^n$ which (after rescaling the metric $g$) is an isometry on $\p \sig$, with $\vp(\p \sig) \subset \p B^n$ and such that $\vp(\sig)$ meets $\p B^n$ orthogonally along  $\vp(\p \sig)$. For $k>2$ this map is an isometric minimal immersion of $\Sigma$ to its image.
Moreover, the immersion is given by a subspace of the first eigenspace.
\end{theorem}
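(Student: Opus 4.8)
The plan is to run the Li--Yau conformal volume argument with the Dirichlet-to-Neumann Rayleigh quotient in place of the one for the Laplacian, and the boundary integral $\int_{\p\sig}u^2\,ds$ in place of the $L^2$-volume on a closed manifold. Fix $n$ and let $\vp\colon\sig\to B^n$ be any non-degenerate conformal map with $\vp(\p\sig)\subset\p B^n$. Conformality and non-degeneracy force $\vp|_{\p\sig}\colon\p\sig\to\p B^n=S^{n-1}$ to be an immersion, so Lemma \ref{lemma:hersch} provides a conformal automorphism of $S^{n-1}$ — which extends to an element $f\in G$ by Poincar\'e extension — with $\psi:=f\circ\vp=(\psi^1,\dots,\psi^n)$ satisfying $\int_{\p\sig}\psi^i\,ds=0$ for all $i$. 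Each $\psi^i$ is then admissible in the variational characterization of $\sigma_1$, so
\[
\sigma_1\int_{\p\sig}(\psi^i)^2\,ds\le\int_\sig|\n\hat\psi^i|^2\,da\le\int_\sig|\n\psi^i|^2\,da,
\]
where $\hat\psi^i$ is the harmonic extension of $\psi^i|_{\p\sig}$ and the second inequality uses that the harmonic extension minimizes Dirichlet energy among all extensions. Summing over $i$ and using $\sum_i(\psi^i)^2\equiv1$ on $\p\sig$ gives $\sigma_1 V(\p\sig)\le\sum_i\int_\sig|\n\psi^i|^2\,da$.

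Next I would convert the right-hand side into the volume of the image. Writing $\rho$ for the conformal factor of $\psi$ (so that pulling back the Euclidean metric gives $\rho^2 g$), one has $\sum_i|\n\psi^i|^2=k\rho^2$ pointwise and $V(\psi(\sig))=\int_\sig\rho^k\,da$, hence $\sum_i\int_\sig|\n\psi^i|^2\,da=k\int_\sig\rho^2\,da$. By H\"older's inequality with exponents $k/2$ and $k/(k-2)$ (vacuous when $k=2$),
\[
\int_\sig\rho^2\,da\le\Big(\int_\sig\rho^k\,da\Big)^{2/k}V(\sig)^{(k-2)/k}=V(\psi(\sig))^{2/k}\,V(\sig)^{(k-2)/k}.
\]
Since $\psi=f\circ\vp$ with $f\in G$, $V(\psi(\sig))\le V_{rc}(\sig,n,\vp)$. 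Combining the displays gives $\sigma_1 V(\p\sig)\le k\,V_{rc}(\sig,n,\vp)^{2/k}V(\sig)^{(k-2)/k}$, and taking the infimum over $\vp$ yields $\sigma_1 V(\p\sig)V(\sig)^{(2-k)/k}\le k\,V_{rc}(\sig,n)^{2/k}$, as claimed.

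For the equality statement, suppose the infimum defining $V_{rc}(\sig,n)$ is attained by a map $\vp$ (I will return to this point below), so that every inequality in the chain becomes an equality for the balanced map $\psi$. Equality in the Dirichlet-energy comparison forces $\psi^i=\hat\psi^i$ for each $i$, so $\psi$ is harmonic; equality in each Rayleigh quotient forces every nonzero $\psi^i$ to be a first Steklov eigenfunction, so $\p\psi^i/\p\nu=\sigma_1\psi^i$ on $\p\sig$ and the components span a subspace of the first eigenspace; and equality in H\"older forces $\rho$ to be a constant $c_0$ on $\sig$ when $k>2$ (for $k=2$ one instead extracts $\rho\equiv\sigma_1$ on $\p\sig$ directly from the boundary relations below). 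Thus $\psi$ is a conformal harmonic map which, after rescaling $g$ by a constant, is an isometric immersion; being conformal, harmonic, and of constant conformal factor, its image is minimal (for $k>2$ a harmonic isometric immersion has vanishing mean curvature, giving an isometric minimal immersion of $\sig$; for $k=2$ this is the classical fact that a conformal harmonic map is a minimal surface). For the boundary condition, note $|\psi|\equiv1$ on $\p\sig$ while $d\psi(\nu)=\p\psi/\p\nu=\sigma_1\psi$ has Euclidean length $\sigma_1$ and also length $c_0$ (isometry up to scale); hence $c_0=\sigma_1$, and with respect to the rescaled metric the outward unit conormal of $\sig$ is carried to $\psi$, which is the outer unit normal of $B^n$ along $\p B^n$. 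Therefore $\vp(\sig)$ meets $\p B^n$ orthogonally, and equivalently, by Lemma \ref{lemma:minimal}, $\vp$ is a free boundary solution immersed by first Steklov eigenfunctions.

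I expect the main technical obstacle to be precisely the point I assumed above: justifying that equality in the theorem produces an actual extremal map rather than just a minimizing sequence. One approach is to take conformal maps $\vp_j$ with $V_{rc}(\sig,n,\vp_j)\to V_{rc}(\sig,n)$, balance them to get $\psi_j$, observe that the inequality chain is then asymptotically sharp so that the $\psi_j$ are near-harmonic, near-conformal-of-constant-factor, and near-eigenfunctions, and extract a subsequential limit using elliptic estimates and the uniform energy bound; the remaining care is bookkeeping of the various rescalings so that the limit satisfies all the equality conditions simultaneously. The rest of the argument — the H\"older step, the pullback identities for $\rho$, and the identification with the free boundary condition via Lemma \ref{lemma:minimal} — is routine.
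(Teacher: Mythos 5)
Your proof of the inequality is exactly the paper's argument: balance via Lemma \ref{lemma:hersch}, test with the components $\psi^i$, use conformality to write $\sum_i|\n\psi^i|^2=k\rho^2$, and apply H\"older; no issues there. The rigidity analysis you give \emph{conditional on an extremal map} also matches the paper's conclusions (harmonicity, first-eigenfunction property, equality in H\"older forcing $|\n\psi|^2$ constant for $k>2$, and the identification of the conformal factor with $\sigma_1$ on the boundary giving the free boundary condition).

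The genuine gap is the one you flag yourself: the existence of the extremal map. The paper never shows the infimum defining $V_{rc}(\sig,n)$ is attained; it works directly with a balanced minimizing sequence $\vp_j$. The mechanism is not ``elliptic estimates'' --- the $\vp_j$ are merely conformal maps satisfying no PDE, so there is no elliptic regularity to invoke. Instead: the asymptotic equality of the chain gives a uniform bound on $\int_\sig\bigl(\sum_i|\n\vp_j^i|^2\bigr)^{k/2}$, hence a uniform $W^{1,k}$ bound; Rellich compactness yields a subsequence converging weakly in $W^{1,k}$, strongly in $L^2(\sig)$ and (by compactness of the trace) in $L^2(\p\sig)$, to some $\psi$. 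For each $i$ one then has $\lim_j\int_\sig|\n\vp_j^i|^2\,da=\sigma_1\int_{\p\sig}(\psi^i)^2\,ds\le\int_\sig|\n\psi^i|^2\,da$, while weak lower semicontinuity gives the reverse inequality; equality of norms plus weak convergence upgrades to strong $W^{1,2}$ convergence, which is what allows the pointwise conformality relations to pass to the limit and shows $\psi$ is conformal, harmonic, and built from first eigenfunctions. Two further points your sketch omits: some coordinates may degenerate ($\int_\sig(\vp_j^i)^2\to0$), so after reordering the limit lands in $B^N$ for some $N\le n$; and for $k>2$ the constancy of $|\n\psi|^2$ is extracted by passing the H\"older equality to the limit via lower semicontinuity, not read off from a fixed minimizer. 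So the inequality and the conditional rigidity are fine, but the unconditional equality statement needs this weak-compactness argument rather than the route you suggest.
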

The following is an immediate consequence of the theorem.

\begin{corollary} \label{cor:rcestimate-surface}
Let $\sig$ be a compact surface with nonempty boundary and metric $g$. Let $\sigma_1 >0$ be the first non-zero eigenvalue of the Dirichlet-to-Neumann map on $(\sig,g)$. Then
\[
         \sigma_1 \, L(\p \sig) \leq 2 \, V_{rc}(\sig,n)
\]
for all $n$ for which $V_{rc}(\sig,n)$ is defined.
Equality implies that there exists a conformal minimal immersion $\vp: \sig \rightarrow B^n$ by first eigenfunctions which (after rescaling the metric) is an isometry on $\p \sig$, with $\vp(\p \sig) \subset \p B^n$ and such that $\vp(\sig)$ meets $\p B^n$ orthogonally along $\vp(\p \sig)$. 
\end{corollary}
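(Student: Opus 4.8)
The plan is to adapt the Li--Yau \cite{LY} and El Soufi--Ilias \cite{EI} arguments to the Steklov setting, with Hersch's balancing trick playing the role it plays in Theorem \ref{theorem:yy}. Fix an arbitrary non-degenerate conformal map $\vp\colon\sig\to B^n$ with $\vp(\p\sig)\subset\p B^n$. For every $f$ in the conformal group $G$ of $B^n$, the composition $f\circ\vp$ is again a conformal map of $\sig$ into $B^n$ sending $\p\sig$ into $\p B^n$; restricting to the boundary gives an immersion $\p\sig\to S^{n-1}$, and since $G$ restricts to the conformal group of $S^{n-1}$, Lemma \ref{lemma:hersch} applied to $\p\sig$ with the boundary arclength measure $ds$ of $g$ produces $f\in G$ such that $\psi:=f\circ\vp=(\psi^1,\dots,\psi^n)$ satisfies $\int_{\p\sig}\psi^i\,ds=0$ for all $i$. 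Then each $\psi^i$ is admissible in the variational characterization of $\sigma_1$, so with $\widehat{\psi^i}$ the harmonic extension of $\psi^i|_{\p\sig}$ we get
\[
\sigma_1\int_{\p\sig}(\psi^i)^2\,ds\le\int_\sig|\n\widehat{\psi^i}|^2\,da\le\int_\sig|\n\psi^i|^2\,da .
\]
Summing over $i$ and using $|\psi|^2\equiv 1$ on $\p\sig$ gives $\sum_i\int_{\p\sig}(\psi^i)^2\,ds=V(\p\sig)$; writing $\psi^*(\text{euclidean})=\rho^2g$ on $\sig$, one has $\sum_i|\n\psi^i|^2=k\rho^2$ and $\int_\sig\rho^k\,dv_g=V(\psi(\sig))\le V_{rc}(\sig,n,\vp)$.

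The second step is the Hölder inequality
\[
\int_\sig\rho^2\,dv_g\le\Bigl(\int_\sig\rho^k\,dv_g\Bigr)^{2/k}\Bigl(\int_\sig dv_g\Bigr)^{(k-2)/k}=V(\psi(\sig))^{2/k}\,V(\sig)^{(k-2)/k},
\]
which combined with the previous line yields $\sigma_1\,V(\p\sig)\le k\,V_{rc}(\sig,n,\vp)^{2/k}\,V(\sig)^{(k-2)/k}$. Rearranging and taking the infimum over all admissible $\vp$ gives the stated bound. For $k=2$ the Hölder step is vacuous and the argument is exactly the one in the proof of Theorem \ref{theorem:yy}, which is why Corollary \ref{cor:rcestimate-surface} drops out immediately.

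For the equality statement I would run the inequalities backwards. Equality forces: (i) $\widehat{\psi^i}=\psi^i$, so $\psi\colon\sig\to B^n$ is a componentwise harmonic conformal map; (ii) each $\psi^i$ lies in the $\sigma_1$-eigenspace, i.e. $\p\psi^i/\p\nu=\sigma_1\psi^i$ on $\p\sig$; and (iii) equality in Hölder, which for $k>2$ forces $\rho$ to be constant on $\sig$. From (ii) and $|\psi|^2\equiv 1$ on $\p\sig$ one computes $\rho^2=|d\psi(\nu)|^2=\sigma_1^2|\psi|^2=\sigma_1^2$ on $\p\sig$, so $\rho$ is constant along $\p\sig$ in all cases; after rescaling $g$ by a constant we may arrange $\rho\equiv 1$ on $\p\sig$ (and $\rho\equiv 1$ on all of $\sig$ when $k>2$), and then $\sigma_1=1$ automatically. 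Hence $\psi$ is an isometry on $\p\sig$ with $\psi(\p\sig)\subset\p B^n$, the condition $\p\psi/\p\nu=\psi$ says the conormal equals the position vector, and Lemma \ref{lemma:minimal} shows $\psi(\sig)$ meets $\p B^n$ orthogonally along $\psi(\p\sig)$ and is minimal. For $k>2$ the constancy of $\rho$ makes $\psi$ a homothetic, hence (after the rescaling) isometric, immersion, and a harmonic isometric immersion is minimal; moreover the coordinate functions span a subspace of the first eigenspace by construction. This is the last assertion, and the $k=2$ case of the theorem is Corollary \ref{cor:rcestimate-surface}.

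The main obstacle I expect is the bookkeeping in the equality case rather than any single hard estimate: one must make precise the passage ``harmonic $+$ conformal $+$ first Steklov eigenfunction $\Rightarrow$ free boundary minimal immersion,'' choosing the constant rescaling of $g$ so that simultaneously $\rho\equiv 1$ on $\p\sig$ and $\sigma_1=1$, and, for $k=2$, keeping careful track of the fact that $\psi$ need only be conformal (a branched minimal immersion) in the interior while still being an isometry on $\p\sig$. The balancing lemma and the Hölder step are routine once the setup is in place.
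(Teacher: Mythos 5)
Your proof of the inequality is correct and is essentially the paper's argument (which is given as the proof of Theorem \ref{thm:rcestimate}): balance $\vp$ by Lemma \ref{lemma:hersch}, use the coordinates as test functions in the variational characterization of $\sigma_1$, apply H\"older and conformality, and take the infimum over $\vp$; for $k=2$ the H\"older step collapses and the corollary follows.

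The equality case, however, has a genuine gap. You write ``Equality forces: (i) $\widehat{\psi^i}=\psi^i$ \dots'' as if equality in $\sigma_1 L(\p\sig)=2V_{rc}(\sig,n)$ turned every link of the chain into an equality for some particular conformal map $\psi=f\circ\vp$. It does not: the chain for a \emph{fixed} $\vp$ ends with $V(\psi(\sig))\le V_{rc}(\sig,n,\vp)$, and $V_{rc}(\sig,n,\vp)$ may exceed the infimum $V_{rc}(\sig,n)$ by a definite amount, leaving slack that prevents you from concluding anything about that $\vp$. Your argument therefore implicitly assumes that the infimum defining the relative conformal volume is attained by some extremal conformal map, which is not known a priori. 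This is precisely the point where the paper's proof does real work: it takes a minimizing sequence $\vp_j$ with $V_{rc}(\sig,n,\vp_j)\to V_{rc}(\sig,n)$, balances each $\vp_j$, shows that all intermediate inequalities become equalities only in the limit $j\to\infty$, and then extracts a limit map $\psi$ by weak $W^{1,k}$ compactness, upgrades to strong $W^{1,2}$ convergence via the energy identities (so that conformality survives the limit), discards the components that converge to zero, and only then verifies that the surviving $\psi^i$ are harmonic first eigenfunctions with $\sum_i(\psi^i)^2=1$ on $\p\sig$. Once that limiting map is in hand, your ``run the inequalities backwards'' bookkeeping (eigenfunction property, $\p\psi/\p\nu=\sigma_1\psi$, rescaling so $\sigma_1=1$, orthogonal intersection with $\p B^n$, and constancy of the conformal factor for $k>2$) matches the paper's conclusion; but without the compactness step the extremal map whose properties you analyze has not been produced.
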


\begin{proof}
Let $\vp : \sig \rightarrow B^n$ be a conformal map with $\vp(\p \sig) \subset \p B^n$.
By Lemma \ref{lemma:hersch} we can assume that $\vp=(\vp^1,\ldots,\vp^n)$ satisfies
\[
       \int_{\p \sig} \vp^i \; ds =0
\]
for $i=1, \ldots, n$.
Let $\hat{\vp}^i$ be a harmonic extension of $\vp^i |_{\p \sig}$. Then,
\begin{equation} \label{eq:var}
     \sigma_1 \leq \frac{\int_\sig |\n \hat{\vp}^i |^2 \; dv_\sig }{\int_{\p \sig} (\vp^i)^2 \; dv_{\p \sig}}
                 \leq \frac{\int_\sig |\n \vp^i|^2 \; dv_\sig }
                                {\int_{\p \sig} (\vp^i)^2 \; dv_{\p \sig}}.
\end{equation}
By H\"{o}lder's inequality, and since $\vp$ is conformal, 
\begin{align*}
      \int_\sig \sum_{i=1}^n| \n \vp^i |^2 \; dv_\sig 
      &\leq V(\sig)^{\frac{k-2}{k}} 
         \left[ \int_\sig 
         \Big( \sum_{i=1}^n| \n \vp^i |^2 \Big)^{\frac{k}{2}} \; dv_\sig \right]^{\frac{2}{k}} \\
      &= V(\sig)^{\frac{k-2}{k}} \left[ k^{\frac{k}{2}} V( \vp (\sig)) \right]^{\frac{2}{k}} \\
      &\leq kV(\sig)^{\frac{k-2}{k}}  V_{rc}(\sig,n,\vp)^{\frac{2}{k}}. \\
\end{align*}
On the other hand, since $\vp(\p \sig) \subset \p B^n$, 
\[
     \sum_{i=1}^n \int_{\p \sig} (\vp^i)^2 \; dv_{\p \sig} = \int_{\p \sig} dv_{\p \sig} = V(\p \sig).
\]
Then by (\ref{eq:var}) we have
\[
     \sigma_1 \, V(\p \sig) \, V(\sig)^{\frac{2-k}{k}} \leq k V_{rc} (\sig,n,\vp)^{\frac{2}{k}}.
\]
Since $V_{rc}(\sig,n)=\inf_\vp V_{rc} (\sig,n,\vp)$ we get
\[
     \sigma_1 \, V(\p \sig) \, V(\sig)^{\frac{2-k}{k}} \leq k V_{rc} (\sig,n)^{\frac{2}{k}}.
\]

Now assume that we have equality, 
$\sigma_1 \, V(\p \sig) = k V_{rc} (\sig,n)^{2/k} \, V(\sig)^{(k-2)/k}$.
Choose a sequence of conformal maps $\vp_j: \sig \rightarrow B^n$ with $\vp_j(\p \sig) \subset \p B^n$, such that 
\[
      \lim_{j \rightarrow \infty} V_{rc} (\sig, n, \vp_j) = V_{rc}(\sig,n)
\]
and by composing with a conformal transformation of the ball we may assume
\[
     \int_{\p \sig} \vp_j^i \; ds =0
\]
for all $i,\,j$. By changing the order of coordinates, we may assume that
\[
   \lim_{j \rightarrow \infty} \int_\sig (\vp_j^i)^2 \; da \quad
   \begin{cases}
        > 0 & i=1, \ldots, N \\
        =0  & i= N+1, \ldots, n.
   \end{cases}
\]
We have
\begin{align*}
        \sigma_1V(\p \sig) 
        & = \sigma_1 \sum_{i=1}^n \int_{\p \sig} (\vp_j^i)^2 \; dv_{\p \sig}
            \leq \sum_{i=1}^n \int_\sig |\n \vp_j^i |^2 \; dv_\sig \\ 
        & \leq V(\sig)^{\frac{k-2}{k}} \left[ \int_\sig 
                  \Big( \sum_{i=1}^n| \n \vp_j^i |^2 \Big)^{\frac{k}{2}} \; dv_\sig \right]^{\frac{2}{k}} 
         \leq k V_{rc} (\sig,n,\vp_j)^{\frac{2}{k}} V(\sig)^{\frac{k-2}{k}}.
\end{align*}
Letting $j \rightarrow \infty$ and using 
$\sigma_1 \, V(\p \sig) = k V_{rc} (\sig,n)^{2/k} \, V(\sig)^{(k-2)/k}$ 
we get
\begin{align}\label{eq:limit}
       \sigma_1V(\p \sig) 
         & =\sigma_1 \lim_{j \rightarrow \infty} \sum_{i=1}^n \int_{\p \sig} (\vp_j^i)^2 \; dv_{\p \sig}
             =\lim_{j \rightarrow \infty} \sum_{i=1}^n \int_\sig |\n \vp_j^i |^2 \; dv_\sig \notag \\
         & =V(\sig)^{\frac{k-2}{k}} \lim_{j \rightarrow \infty}\left[ \int_\sig 
         \Big( \sum_{i=1}^n| \n \vp_j^i |^2 \Big)^{\frac{k}{2}} \; dv_\sig \right]^{\frac{2}{k}}
         =\sigma_1 V(\p \sig).
  \end{align}                              
Therefore, for any fixed $i$, $\{\vp_j^i\}$ is a bounded sequence in $W^{1,k}(\sig,\mathbb{R})$, and
since the inclusion $W^{1,k}(\sig,\mathbb{R}) \subset L^2(\sig,\mathbb{R})$ is compact, by passing to a subsequence we can assume that $\{\vp_j^i\}$ converges weakly in $W^{1,k}(\sig,\mathbb{R})$, strongly in $L^2(\sig,\mathbb{R})$, and pointwise a.e., to a map $\psi^i: \sig \rightarrow \mathbb{R}$. Clearly
$\sum_{i=1}^n (\psi^i)^2 \leq 1$ a.e. on $\sig$, $\sum_{i=1}^n (\psi^i)^2=1$ a.e. on $\p \sig$, and 
$\psi^i=0$ for $i=N+1, \ldots, n$. Since for all $i$
\[
     \sigma_1 \int_{\p \sig} (\vp_j^i)^2 \; dv_{\p \sig} \leq \int_\sig |\n \vp_j^i |^2 \; dv_\sig
\]
and
\[
     \sigma_1 \lim_{j \rightarrow \infty} \sum_{i=1}^n \int_{\p \sig} (\vp_j^i)^2 \; dv_{\p \sig}
     = \lim_{j \rightarrow \infty} \sum_{i=1}^n \int_\sig |\n \vp_j^i|^2 \; dv_\sig,
\]
we have
\begin{equation} \label{eq:psi}
   \lim_{j \rightarrow \infty} \int_\sig |\n \vp_j^i |^2 \; dv_\sig
     = \sigma_1 \lim_{j \rightarrow \infty} \int_{\p \sig} (\vp_j^i)^2 \; dv_{\p \sig}       
     = \sigma_1 \int_{\p \sig} (\psi^i)^2 \; dv_{\p \sig}
     \leq  \int_\sig |\n \psi^i|^2 \; dv_\sig.
\end{equation}
On the other hand, $\vp_j^i \rightarrow \psi^i$ weakly in $W^{1,k}(\sig, \mathbb{R})$, and so
\[
     \int_\sig |\n \psi^i|^2 \; dv_\sig \leq \lim_{j \rightarrow \infty} \int_\sig |\n \vp_j^i |^2 \; dv_\sig.
\]
Therefore, we must have equality in (\ref{eq:psi}), and so 
\[
      \lim_{j \rightarrow \infty} \int_\sig |\n \vp_j^i |^2 \; dv_\sig = \int_\sig |\n \psi^i|^2 \; dv_\sig
\]
which means $\{\vp_j^i\}$ converges to $\psi$ strongly in $W^{1,2}(\sig,\mathbb{R})$.
Moreover, 
\[
      \sigma_1 \int_{\p \sig} (\psi^i)^2 \; dv_{\p \sig} = \int_\sig |\n \psi^i|^2 \; dv_\sig
\]
and it follows that $\{ \psi^i \}_{i=1}^N$ are first eigenfunctions. In particular, $\psi^i$ is harmonic for $i=1, \ldots, N$. Also, since $\vp_j$ is conformal and converges strongly in $W^{1,2}$ to $\psi$, the map
\begin{align*}
       \psi: \sig & \rightarrow B^N \\
       x &\mapsto (\psi^1(x), \ldots, \psi^N(x))
\end{align*}
defines a conformal map.  Therefore, $\psi: \sig \rightarrow B^N$ is conformal and harmonic, with 
$\psi(\p \sig) \subset \p B^N$. 
Since $\psi(\p \sig) \subset \p B^N$ and
\begin{equation} \label{eq:ev}
      \fder{\psi}{\nu}=\sigma_1 \psi
\end{equation}
on $\p \sig$ since $\psi^i$ are eigenfunctions, it follows that $\psi(\sig)$ meets $\p B^N$ orthogonally along $\psi(\p \sig)$. 

By scaling the metric we can assume that $\sigma_1=1$. Then by (\ref{eq:ev}), on $\p \sig$ we have
\[
      \left|\fder{\psi}{\nu}\right|=|\psi|=1,
\]
and hence $\psi$ is an isometry on $\p \sig$. Finally, for $k>2$ we have from (\ref{eq:limit})
\[ 
        \lim_{j \rightarrow \infty} \sum_{i=1}^n \int_\sig |\n \vp_j^i |^2 \; dv_\sig
      =\sum_{i=1}^n \int_\sig |\n \psi^i |^2 \; dv_\sig
      =V(\sig)^{\frac{k-2}{k}} \lim_{j \rightarrow \infty}\left[ \int_\sig 
         \Big( \sum_{i=1}^n| \n \vp_j^i |^2 \Big)^{\frac{k}{2}} \; dv_\sig \right]^{\frac{2}{k}}.
\]
By lower semicontinuity of the norm under weak convergence this implies
\[
        \int_\sig |\n \psi|^2 \; dv_\sig
      \geq V(\sig)^{\frac{k-2}{k}} \left[ \int_\sig 
         \Big( \sum_{i=1}^n| \n \psi^i |^2 \Big)^{\frac{k}{2}} \; dv_\sig \right]^{\frac{2}{k}}.
\]
Now the H\"older inequality implies the opposite inequality and thus we have equality
in the H\"older inequality, which implies $|\n\psi|^2$ is constant on $\Sigma$, and this constant must
be $k$ by the boundary normalization. Since $\psi$ is conformal this implies that $\psi$ is
an isometry as claimed. 

\end{proof}

\bibliographystyle{plain}

\end{document}